 
\documentclass[12pt]{amsart}
\usepackage{epsfig,color}

\headheight=6.15pt \textheight=8.75in \textwidth=6.5in
\oddsidemargin=0in \evensidemargin=0in \topmargin=0in



\makeatother

\theoremstyle{definition}

\def\fnum{equation}
\newtheorem{Thm}[\fnum]{Theorem}
\newtheorem{Cor}[\fnum]{Corollary}

\newtheorem{Lem}[\fnum]{Lemma}

\newtheorem{Pro}[\fnum]{Proposition}

\numberwithin{equation}{section}

\setcounter{section}{-1}

\newcommand{\nn}{{\bf{n}}}

\newcommand{\dist}{{\text {dist}}}

\newcommand{\cK}{{\mathcal{K}}}

\newcommand{\Hess}{{\text {Hess}}}

\def\RR{{\bold R}}

\def\SS{{\bold S}}

\newcommand{\dv}{{\text {div}}}
\newcommand{\e}{{\text {e}}}

\newcommand{\cH}{{\mathcal{H}}}
\newcommand{\cN}{{\mathcal{N}}}
\newcommand{\cO}{{\mathcal{O}}}
\newcommand{\cU}{{\mathcal{U}}}
\newcommand{\cM}{{\mathcal{M}}}

\newcommand{\eqr}[1]{(\ref{#1})}

\title[Wandering singularities]{Wandering singularities}
\author[]{Tobias Holck Colding}%
\address{MIT, Dept. of Math.\\
77 Massachusetts Avenue, Cambridge, MA 02139-4307.}
\author[]{William P. Minicozzi II}%
 
\thanks{The  authors
were partially supported by NSF Grants DMS 1812142 and DMS 1707270.}


\email{colding@math.mit.edu  and minicozz@math.mit.edu}

\begin{document}

\maketitle

\begin{abstract}
Parabolic geometric flows are smoothing for short time however, over long time, singularities are typically unavoidable,  can be very nasty and may be impossible to classify.  The idea of  \cite{CM6} and here is that, by bringing in the dynamical properties of the flow, we obtain also smoothing for large time for generic initial conditions. When combined with \cite{CM1}, this shows, in an important special case, the singularities are the simplest possible.

 The question of the dynamics  of a singularity has two parts.  One is:   What are the dynamics near a singularity?  The second is:  What is the long time behavior?  That is, if the flow leaves a neighborhood of a singularity, can it return at a much later time?  The first question was addressed in \cite{CM6} and the second here.
 
 Combined with \cite{CM1}, \cite{CM6}, we show that all other closed singularities than the (round) sphere have a neighborhood where ``nearly every'' closed hypersurface leaves under the flow   and never returns, even  to a dilated, rotated or translated copy of the singularity.  In other words, it {\it{wanders}} off.  In contrast, by Huisken,  any closed hypersurface  near a  sphere remains close to a dilated or translated copy of the sphere at each time.  
\end{abstract}

\section{Introduction}

The mean curvature flow, or MCF, is the negative gradient flow of volume on the space of submanifolds.  To understand the flow, the key  is to understand the singularities.  
A neighborhood of a singular point for the flow is modeled by its blow-up. Blow-ups, or tangent flows,  are  shrinkers \cite{H2}, \cite{I}, \cite{W}.  A one parameter family of submanifolds $M_t$ flowing by the MCF is   a shrinker  if it evolves by rescaling with $M_t=\sqrt{-t}\,M_{-1}$.  The simplest shrinkers are round spheres and cylinders, but  many other exotic shrinkers are known to exist.

Suppose that $M_t \subset \RR^{n+1}$ is a one-parameter family of closed hypersurfaces flowing by MCF.  We would like to analyze the flow near a singularity.    If we reparametrize and rescale the flow, then we get a solution to the rescaled MCF equation which is the negative gradient flow for the Gaussian volume ($F$-functional)
\begin{align}
	F(\Sigma) = 	\int_{\Sigma} \e^{ - \frac{|x|^2}{4} } \, .
\end{align}
  Here the gradient is with respect to the weighted inner product on the space of normal variations.    The fixed points of the rescaled MCF, or equivalently the critical points of the $F$-functional, are shrinkers.   This rescaling   turns the question of the dynamics of the MCF near a singularity into a question of the dynamics near a fixed point for the rescaled flow.

Existing examples of exotic shrinkers,  \cite{A},  \cite{Ch}, \cite{KKM}, \cite{K}, \cite{Nu},  
   suggest   that  a complete classification of shrinkers 
is impossible for $n>1$.     This lack of possible classification makes the question of whether these exotic singularities   occur generically even more relevant.  
  In \cite{CM1}, we showed that the only smooth stable shrinkers are spheres, planes, and generalized cylinders (i.e., $\SS^k \times \RR^{n-k}$).    In particular, the round sphere is the only closed stable singularity.   A closed shrinker is said to be stable if, modulo translations and dilations, the second derivative of the $F$-functional is non-negative for all variations, see \cite{CM1}.
    We show here that, in a suitable sense, ``nearly every'' hypersurface in a neighborhood of a closed unstable shrinker is wandering or, equivalently, non-recurrent.\footnote{A dynamical system is 
  {\it dissipative} if it has a wandering set of non-zero measure. This is  the opposite of a conservative system, for which the ideas of the Poincar\'e recurrence theorem apply. Intuitively,   if a portion of the phase space ``wanders away'' during normal time-evolution, and is never visited again, then the system is dissipative.    The notion of wandering sets  was introduced by Birkhoff in 1927.}

  Let $\Sigma \subset \RR^{n+1}$  be a smooth closed embedded hypersurface with unit normal $\nn$.  We will identify nearby hypersurfaces with  functions on $\Sigma$ by identifying a function with its graph.  Namely, let
  $E$ be the Banach space of $C^{2,\alpha}$ functions on  $\Sigma$ and let  $\Upsilon$ be the map from $E$ to subsets of $\RR^{n+1}$ that takes $u \in E$ to its normal exponential graph $\Upsilon (u)$
\begin{equation}
	 \Upsilon (u)  = \{ p + u(p) \, \nn (p) \, | \, p \in \Sigma \} \, .
\end{equation}  
Since $\Sigma$ is closed and embedded, 
there is a neighborhood $\cU$ of $0$ in $E$ where $\Upsilon$ is a bijection to a neighborhood $\hat{\cU} = \Upsilon (\cU)$ of $\Sigma$ in the space of $C^{2,\alpha}$ closed hypersurfaces.
When it is clear, we will   identify a function $u$ with its graph $\Upsilon (u)$.

The conformal linear group $\cH$ of $\RR^{n+1}$ is  generated by   rigid motions and  dilations.  This group acts naturally on subsets of $\RR^{n+1}$,  preserving the space of 
$C^{2,\alpha}$ closed hypersurfaces.  
However, not all elements  of  the orbit $\cH(\Sigma) = \bigcup_{g \in \cH} g(\Sigma)$ are graphs over $\Sigma$.

Our first main result is that a closed hypersurface that flows out of a neighborhood of a closed shrinker and its orbit under  $\cH$ can never return:
  
  \begin{Thm}	\label{t:lojas}
Let $\Sigma\subset \RR^{n+1}$ be any closed $n$-dimensional shrinker and $\cO$ an open neighborhood of $\Sigma$. There exist  open neighborhoods $\cU_1\subset \cU_2 \subset \cO$, such that if a rescaled MCF starts at a closed hypersurface $M$ in $\cU_1$ and leaves $\cH(\cU_2)$, then it cannot return to $\cH(\cU_1)$.
\end{Thm}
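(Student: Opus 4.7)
\emph{Proof proposal.} My plan is to prove this via a Lojasiewicz--Simon inequality for the $F$-functional at $\Sigma$, yielding a quantitative length bound on the rescaled MCF that makes a round-trip across the collar $\cH(\cU_2)\setminus\cH(\cU_1)$ impossible.

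The key input, following \cite{CM6}, is a Lojasiewicz--Simon inequality: there exist $\theta\in(0,1/2]$ and $C>0$ such that for every closed hypersurface $N$ sufficiently close to $\cH(\Sigma)$,
\begin{equation}
|F(N)-F(\Sigma)|^{1-\theta}\le C\,\bigl\|H_N-\tfrac{1}{2}\langle x,\nn_N\rangle\bigr\|_{L^2(\e^{-|x|^2/4})}.
\end{equation}
Combined with the Huisken-type identity $\partial_t F(M_t)=-\bigl\|H-\tfrac{1}{2}\langle x,\nn\rangle\bigr\|_{L^2(\e^{-|x|^2/4})}^2$ along rescaled MCF, this yields the standard Simon length estimate
\begin{equation}
\int_{s_1}^{s_2}\|\partial_t M_t\|_{L^2(\e^{-|x|^2/4})}\,dt\;\le\;C'\,\bigl(F(M_{s_1})-F(\Sigma)\bigr)^\theta,
\end{equation}
valid on any time interval during which $M_t$ remains in the Lojasiewicz neighborhood.

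Given this, I would choose $\cU_2\subset\cO$ small enough that $\cH(\cU_2)$ lies in the Lojasiewicz neighborhood, and introduce an $\cH$-invariant $L^2$-type width $w=w(\cU_1,\cU_2)>0$ measuring the minimum distance a flow must travel to cross from the complement of $\cH(\cU_2)$ back into $\cH(\cU_1)$; positivity comes from $\overline{\cU_1}\subset\cU_2$ once the metric is fixed compatibly with the $\cH$-action. I would then shrink $\cU_1$ so that $C'(F(M_0)-F(\Sigma))^\theta<w$ for every $M_0\in\cU_1$, which is possible by continuity of $F$ at $\Sigma$. If the flow were to leave $\cH(\cU_2)$ and later return to $\cH(\cU_1)$, the return portion would necessarily sit in $\cH(\cU_2)$ throughout and traverse an $L^2$-distance at least $w$, contradicting the length estimate above applied on the return time interval.

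The principal obstacle will be making the Lojasiewicz inequality and the resulting length bound honestly $\cH$-equivariant: rescaled MCF is only equivariant under rotations, not under the dilations and translations in $\cH$, so one cannot simply apply a single Lojasiewicz inequality at $\Sigma$. A natural route is to work instead with the entropy $\lambda$, which is $\cH$-invariant and still non-increasing along the flow, and prove Lojasiewicz--Simon for $\lambda$; alternatively, one can track a best-matching $g_t\in\cH$ at each time and control its drift via Huisken monotonicity for shifted functionals $F_{x_0,t_0}$. Either way, the analytic Lojasiewicz--Simon inequality must be upgraded to hold modulo the finite-dimensional symmetry $\cH$, which is the technical heart of the argument and the ingredient to import from \cite{CM6}.
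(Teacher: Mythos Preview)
Your overall architecture---Lojasiewicz--Simon inequality, then a Simon-type length estimate, then a collar-width argument---matches the paper's, and you have correctly isolated the real difficulty: $F$ and rescaled MCF are only rotation-equivariant, not $\cH$-equivariant, so a Lojasiewicz inequality near $\Sigma$ says nothing when the flow is merely near $g(\Sigma)$ for a dilation or translation $g$. But your two proposed remedies do not close this gap. A Lojasiewicz--Simon inequality for the entropy $\lambda$ is not available: $\lambda$ is a supremum over centers and scales and lacks the analytic variational structure that Simon's argument requires. Continuously tracking a best-matching $g_t\in\cH$ and controlling its drift is closer in spirit, but as phrased it amounts to bounding exactly the quantity you are after. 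Relatedly, your ``$\cH$-invariant $L^2$-type width $w$'' is problematic, since dilations do not preserve any natural $L^2$ distance on hypersurfaces.

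The paper's resolution is sharper. Translations and dilations in $\cH$ correspond precisely to changing the space-time center and scale in the passage from MCF to rescaled MCF; see \eqr{e:rescaled}. So if the original rescaled flow $\tilde M_t$ starts in $\cU_1$ at $T_1$ and returns at time $T_2$ to $\cH(\cU_1)$, one makes a \emph{single} switch to a different rescaled flow $\bar M_t$ of the same underlying MCF, chosen so that \emph{both} endpoints $\bar M_{\bar T}$ and $\bar M_0$ lie near $\Sigma$ itself. The entropy is then used only to transfer the $F$-bound across this switch: since $\lambda$ is $\cH$-invariant and monotone, $F(\bar M_{\bar T})\le\lambda(\bar M_{\bar T})=\lambda(\tilde M_{T_1})\le F(\Sigma)+\kappa(\delta)$, while $F(\bar M_0)\ge F(\Sigma)-\kappa(\delta)$ directly. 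Monotonicity of $F$ traps $F(\bar M_t)$ within $2\kappa(\delta)$ of $F(\Sigma)$ on the entire interval, and now the ordinary Lojasiewicz length estimate (Proposition~\ref{p:Fsmall}) applies to $\bar M_t$ with no $\cH$-equivariance needed, forcing $\bar M_t$ to remain in a small neighborhood of $\Sigma$. Since each $\bar M_t$ is an $\cH$-image of the original $\tilde M_{t'}$ at the corresponding time, $\tilde M_t$ never left $\cH(\cU_2)$, a contradiction. The missing idea is this one-time change of rescaled flow, which replaces the need for any $\cH$-equivariant inequality or $\cH$-invariant metric.
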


 Theorem  \ref{t:lojas} is a global result.  There are many flows, even gradient flows, where the conclusion fails and the flow returns infinitely often to a neighborhood of a fixed point.

 In \cite{CM6} we showed that a typical closed hypersurface near an unstable shrinker  leaves the orbit of a  neighborhood of the shrinker.   Combining this with Theorem \ref{t:lojas} gives:

\begin{Thm} 	\label{t:one}
Suppose that $\Sigma^n \subset \RR^{n+1}$ is a smooth closed embedded  shrinker, but is not a round sphere.  There exist  an open  neighborhood $\cO $ of $0\in E$ and  a  subset $ W$ of $\cO$ so that:
\begin{itemize}
\item There is a splitting $E= E_1 \oplus E_2$  with  $\dim (E_1) > 0$ 
so that  $W$ is contained in the graph $(x,u(x))$ of a continuous mapping     $u: E_2 \to E_1$.
\item  If $\Sigma' \in \Upsilon(\cO \setminus W)$, then the rescaled MCF starting at $\Sigma'$ leaves $\cO$ and its orbit  $\cH(\cO)$ under  $\cH$ and never returns.  
\end{itemize}
\end{Thm}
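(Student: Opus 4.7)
The plan is to combine Theorem \ref{t:lojas} with the escape-from-orbit result of \cite{CM6}, whose applicability rests on instability of $\Sigma$. Since $\Sigma$ is smooth, closed, and not a round sphere, the classification in \cite{CM1} ensures that $\Sigma$ is an unstable critical point of the $F$-functional modulo the action of $\cH$, producing at least one genuinely unstable direction for the rescaled MCF. Unstable-manifold theory at such a saddle, which is what \cite{CM6} establishes in this setting, provides, for every sufficiently small neighborhood $\cV$ of $0\in E$, a smaller neighborhood $\cU\subset \cV$, a splitting $E=E_1\oplus E_2$ with $\dim E_1>0$, and a subset $W\subset \cU$ contained in the graph of a continuous map $u\colon E_2\to E_1$, such that the rescaled MCF starting from any hypersurface in $\Upsilon(\cU\setminus W)$ eventually leaves the orbit $\cH(\cV)$.

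The strategy is then to nest neighborhoods so that the output of \cite{CM6} feeds the hypothesis of Theorem \ref{t:lojas}. Fix any open neighborhood $\cO_0$ of $0\in E$. Identifying neighborhoods of $0$ in $E$ with the corresponding neighborhoods of $\Sigma$ under $\Upsilon$, apply Theorem \ref{t:lojas} to $\cO_0$ to obtain $\cU_1\subset \cU_2\subset \cO_0$ such that no rescaled MCF starting in $\cU_1$ which leaves $\cH(\cU_2)$ can return to $\cH(\cU_1)$. Next apply \cite{CM6} with $\cV:=\cU_2$, shrinking the resulting neighborhood if necessary so that it lies inside $\cU_1$. This produces the neighborhood $\cO\subset \cU_1$, the splitting $E=E_1\oplus E_2$ with $\dim E_1>0$, and the graphical set $W\subset \cO$ required by Theorem \ref{t:one}, with the property that every $\Sigma'\in \Upsilon(\cO\setminus W)$ flows out of $\cH(\cU_2)$. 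Since $\cO\subset \cU_1$, such a flow starts in $\cU_1$, so Theorem \ref{t:lojas} forbids any return to $\cH(\cU_1)$, and therefore to the smaller set $\cH(\cO)\subset \cH(\cU_1)$. Finally, leaving $\cH(\cU_2)$ implies leaving both $\cO$ and $\cH(\cO)$, since $\cH(\cO)\subset \cH(\cU_2)$.

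The main obstacle is precisely this matching of quantifiers. On its own, \cite{CM6} produces escape from a prescribed orbit $\cH(\cV)$ but only for initial data in a strictly smaller neighborhood, and says nothing about non-return; Theorem \ref{t:lojas} rules out return but only once the flow has already exited a suitable intermediate orbit. The nested chain $\cO\subset \cU_1\subset \cU_2\subset \cO_0$ is arranged so that the ``leaves $\cH(\cU_2)$'' conclusion of \cite{CM6} is exactly the hypothesis of Theorem \ref{t:lojas}, and the resulting non-return to $\cH(\cU_1)$ subsumes non-return to $\cH(\cO)$. A secondary point to verify is that the splitting $E=E_1\oplus E_2$ and the graphical set $W$ delivered by \cite{CM6} are of the form prescribed by Theorem \ref{t:one}, with $E_1$ spanned by the unstable eigenfunctions of the stability operator modulo the conformal symmetries and $\dim E_1>0$ following from the failure of stability at $\Sigma$.
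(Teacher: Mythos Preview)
Your proposal is correct and follows exactly the approach the paper indicates: the paper itself gives no explicit proof of Theorem~\ref{t:one} beyond the sentence ``Combining this with Theorem~\ref{t:lojas} gives,'' and your argument is a careful execution of precisely that combination, including the nesting of neighborhoods needed to match the output of \cite{CM6} to the hypothesis of Theorem~\ref{t:lojas}. Your verification that the splitting and graphical structure of $W$ come directly from \cite{CM6}, with $\dim E_1>0$ following from the instability result of \cite{CM1}, is also in line with the paper's intent.
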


In contrast to Theorem \ref{t:one}, in a small neighborhood of the round sphere, all closed hypersurfaces are convex and, thus, all become extinct in a round sphere under the MCF by a result of Huisken, \cite{H1}.  However, the extinction point in space-time varies as the hypersurface changes.  Correspondingly, under the rescaled MCF, it may leave a neighborhood of the round sphere but does so near a translation or dilation of the sphere.   Similarly,  there are closed hypersurfaces near any shrinker that flow away but do so  trivially near a rigid motion or dilation of the   shrinker.  Unlike Theorem \ref{t:one}, this leads to no real change/improvement.

\section{Proof of no return}   \label{s:s1}

A one-parameter family of hypersurfaces  $M_t \subset \RR^{n+1}$  flows by mean curvature if
\begin{equation}
	\left( \partial_t x \right)^{\perp} = - H \, \nn  \, .
\end{equation}
Here $x$ is the position vector, 
   $v^{\perp}$ is the  normal part a vector $v$,
  and the mean curvature\footnote{With this convention, $H$ is $n/R$ on the $n$-sphere of radius $R$ in $\RR^{n+1}$.}   is   $H = \dv \,  \nn = \langle \nabla_{e_i} \nn , e_i \rangle$.  
The $e_i$'s are an orthonormal frame for the hypersurface and we sum over   repeated indices.  
The one-parameter family $\tilde M_t = M_{-\e^{-t}}/\sqrt{\e^{-t}}$ is a  rescaled mean curvature flow centered at the origin in space-time    satisfying
\begin{align}
	x_t = \left( \frac{ \langle x , \nn \rangle}{2} - H \right) \, \nn \, .
\end{align}
In this section,  $\Sigma^n\subset \RR^{n+1}$ is a closed shrinker, $M_t$ a MCF, and $\tilde{M}_t = \frac{ M_{-\e^{-t}}}{\sqrt{\e^{-t}}}$ a rescaled MCF.

For a dynamical system, a Lyapunov function is a   monotone quantity.  Since   rescaled MCF is the negative gradient flow for $F$,   $F$ is a Lyapunov function.  In fact, see \cite{CM1}, the entropy 
\begin{align}
\lambda (M)=\sup_{t_0>0,x_0\in \RR^{n+1}}F (t_0 \, M + x_0) 
\end{align}
is   a Lyapunov function for both  MCF and   rescaled MCF.    The entropy is more relevant than   $F$  when one studies generic properties since $\lambda$ is unchanged when a singularity is pushed off to a different point in space-time.

The no return first  uses   \cite{CM1} to show that if it does return modulo $\cH$, then $F$ is almost constant.  Once we have this, then we use   ideas of  Simon, \cite{S}, and  Schulze, \cite{Sc}, to show that   the   flow is nearly
static:

\begin{Pro}  \label{p:Fsmall}
Given a closed smooth shrinker $\Sigma^n\subset \RR^{n+1}$, there exist  constants $C$  and $\beta > 0$ and an open neighborhood $\cO_{\Sigma}$ of $\Sigma$, such that:\\
  If $\tilde M_t\subset \cO_{\Sigma}$ for $t_1\leq t\leq t_2$, then $\tilde M_{t}$ is a normal exponential graph over $\Sigma$ of $u_t$ and
\begin{align}
	\int_{\Sigma} \left| u (p, t_2) - u (p,t_1) \right| 
	\leq C\,  [F(t_1)-F(t_2)]^{\beta} \, .
\end{align}
\end{Pro}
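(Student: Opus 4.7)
\emph{Proof plan.} The plan is to mimic the Lojasiewicz--Simon gradient-flow argument of Simon \cite{S} and Schulze \cite{Sc}. The first observation is that rescaled MCF is the negative gradient flow of $F$ with respect to the weighted $L^2$ inner product associated to $\e^{-|x|^2/4}\,dA$: the weighted gradient of $F$ at a hypersurface $\tilde M$ is $\varphi = H - \langle x, \nn\rangle/2$, and
\begin{align*}
	\frac{d}{dt}\, F(\tilde M_t) \;=\; -\int_{\tilde M_t} \varphi^2\, \e^{-|x|^2/4}\, dA \, .
\end{align*}
In particular $F(t) := F(\tilde M_t)$ is monotonically decreasing along the flow. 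Choosing $\cO_{\Sigma}$ small enough inside the $C^{2,\alpha}$ neighborhood on which $\Upsilon$ is a bijection ensures that whenever $\tilde M_t \subset \cO_{\Sigma}$, we may write $\tilde M_t = \Upsilon(u_t)$ with $u_t$ small in $C^{2,\alpha}$.

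The key analytic input is the Lojasiewicz--Simon inequality for $F$ at $\Sigma$: there exist $\theta \in (0, 1/2]$ and $C$ such that, for every $\tilde M \subset \cO_{\Sigma}$,
\begin{align*}
	\bigl|F(\tilde M) - F(\Sigma)\bigr|^{1-\theta} \;\le\; C\, \Bigl(\int_{\tilde M} \varphi^2\, \e^{-|x|^2/4}\,dA\Bigr)^{1/2} \, .
\end{align*}
This is precisely the statement established by Schulze \cite{Sc}. Granted it, a standard ODE computation gives, whenever $F(t) > F(\Sigma)$,
\begin{align*}
	-\frac{d}{dt}\bigl(F(t) - F(\Sigma)\bigr)^{\theta}
	\;=\; \theta \bigl(F(t)-F(\Sigma)\bigr)^{\theta-1} \int_{\tilde M_t} \varphi^2\, \e^{-|x|^2/4}\,dA
	\;\ge\; \frac{\theta}{C}\, \Bigl(\int_{\tilde M_t} \varphi^2\, \e^{-|x|^2/4}\,dA\Bigr)^{1/2} .
\end{align*}
Integrating from $t_1$ to $t_2$ and using the subadditivity $(a+b)^{\theta} \le a^{\theta} + b^{\theta}$ yields
\begin{align*}
	\int_{t_1}^{t_2} \Bigl(\int_{\tilde M_t} \varphi^2\, \e^{-|x|^2/4}\,dA\Bigr)^{1/2}\, dt \;\le\; C'\, \bigl[F(t_1) - F(t_2)\bigr]^{\theta}\, ,
\end{align*}
which controls the length of the flow trajectory in the weighted $L^2$ norm.

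To finish, one must convert this trajectory-length bound into an $L^1$ bound for $u(\cdot, t_2) - u(\cdot, t_1)$ over $\Sigma$. In the graph parametrization $x(p,t) = p + u(p,t)\,\nn(p)$, a direct computation gives $\partial_t u = \varphi\,\langle \nn, \nn_{\tilde M_t}\rangle^{-1}$ evaluated on $\tilde M_t$; since $u_t$ is $C^{2,\alpha}$-small, both $\langle \nn, \nn_{\tilde M_t}\rangle$ and the Jacobian of the graph map are bounded above and below by universal constants. A change of variables back to $\Sigma$ together with Cauchy--Schwarz (using that $\int_{\tilde M_t} \e^{|x|^2/4}\,dA$ is uniformly bounded because $\tilde M_t$ stays a small graph over the compact $\Sigma$) gives
\begin{align*}
	\int_{\Sigma} |\partial_t u|\, dp \;\le\; C''\, \Bigl(\int_{\tilde M_t} \varphi^2\, \e^{-|x|^2/4}\,dA\Bigr)^{1/2}\, .
\end{align*}
Writing $u(p,t_2) - u(p,t_1) = \int_{t_1}^{t_2} \partial_t u(p,t)\, dt$, applying Fubini, and combining with the length bound completes the proof with $\beta = \theta$.

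The main obstacle is the Lojasiewicz--Simon inequality itself. The second variation of $F$ at $\Sigma$ typically has a nontrivial kernel (from translations, dilations, and possibly genuine unstable directions), so the classical finite-dimensional Lojasiewicz inequality for analytic functions does not apply directly: one must combine a Lyapunov--Schmidt reduction onto the kernel of the linearization with elliptic regularity and the analyticity of the resulting reduced functional, as in Simon's original scheme \cite{S} and Schulze's adaptation to the shrinker setting \cite{Sc}. Once that inequality is in hand, the ODE manipulation and the conversion to an $L^1$ bound on $u$ are essentially routine.
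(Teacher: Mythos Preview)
Your approach is correct and is in fact the standard Simon trick; the paper takes a different, more circuitous route. Both arguments share the same two ingredients: Schulze's Lojasiewicz--Simon inequality (Proposition~\ref{t:loja}) and the elementary conversion $\int_{\Sigma}|\partial_t u|\le C\int_{\tilde M_t}|\varphi|\,\e^{-|x|^2/4}$ (this is the paper's Lemma~\ref{l:dist}). Where they diverge is in how the time integral $\int_{t_1}^{t_2}\bigl(\int|\varphi|^2\e^{-|x|^2/4}\bigr)^{1/2}dt$ is bounded. You differentiate $(F(t)-F(\Sigma))^{\theta}$ directly and integrate, obtaining the trajectory-length bound in one line with exponent $\beta=\theta$. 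The paper instead applies an ODE lemma to get polynomial decay $|F(t)-F(\Sigma)|\le (c+t)^{-1/(1-\beta)}$, then proves a weighted estimate $\int r^{\gamma}\int|\varphi|^2\e^{-|x|^2/4}\,dr\le C(F(0)-F(s))^{1-\gamma(1-\beta)}$ via a dyadic decomposition, and finally splits $[t_1,t_2]$ into four subintervals and applies Cauchy--Schwarz with weight $(r-t_1)^{-\gamma}$ on the long pieces. This yields the weaker exponent $\beta/4$ in their normalization. Your argument is shorter and gives a better exponent; the paper's decay estimates and weighted bounds may be of independent interest but are not needed for the proposition as stated.

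One small correction: after splitting at the time $s$ where $F(t)$ crosses $F(\Sigma)$, you obtain a bound by $(F(t_1)-F(\Sigma))^{\theta}+(F(\Sigma)-F(t_2))^{\theta}$, and you want to dominate this by $C\,(F(t_1)-F(t_2))^{\theta}$. The inequality $(a+b)^{\theta}\le a^{\theta}+b^{\theta}$ that you cite points the wrong way; what you actually use is simply $a,b\le a+b$, giving $a^{\theta}+b^{\theta}\le 2(a+b)^{\theta}$.
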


The group of translations  and dilations transforms the flow $M_t$ into related mean curvature flows.  Namely, if
  $(x_0, t_0 , a) \in \RR^{n+1} \times \RR \times \RR^{+}$, then   $a \left(M_{a^{-2}\,(t_0+t)}+x_0\right)$ is    a MCF and
\begin{align}	\label{e:rescaled}
	\tilde M_t(x_0 , t_0 ,a) = \frac{a}{\sqrt{\e^{-t}}} \, \left( M_{a^{-2}\,(t_0- \e^{-t})}+x_0
	\right) 
\end{align}
is a rescaled MCF.
  Similarly, if $g$ is in the orthogonal group $O(n+1)$, then $g(M_t)$ is  a MCF.

The next proposition shows that if a rescaled MCF   starts near $\Sigma$ and  returns near   $\cH (\Sigma)$, then we can adjust by the group action to get a related flow that is nearly static.  
 Theorem \ref{t:lojas}  will follow  immediately.

\begin{Pro}	\label{p:comingback}
Given $\epsilon>0$, there exists a $\delta>0$, such that if $T_1 < T_2$, 
\begin{align}
	\tilde M_{T_1} {\text{ and }} b\, g( \tilde M_{T_2} + y_0)  \in B_{\delta}(\Sigma) 
\end{align}
  for some $b>0$, $y_0\in \RR^{n+1}$ and $g \in O(n+1)$, then  the rescaled MCF $g\, \tilde M_t(   y_0 , 1 - b^2\e^{-T_2} ,  b )$ 
is $\epsilon$ close to $\Sigma$ for all $t \in \left[ -\log \left\{ 1 - b^2 (\e^{-T_2} - \e^{-T_1}) \right\} ,0 \right]$.  
\end{Pro}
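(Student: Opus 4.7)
\bigskip
\noindent
\textbf{Proof proposal.} The plan is to set
\begin{equation*}
	\tilde N_t := g\,\tilde M_t(y_0, 1 - b^2 e^{-T_2}, b), \qquad I := [t_s, 0],\ t_s := -\log\{1 - b^2(e^{-T_2} - e^{-T_1})\},
\end{equation*}
and prove that $\tilde N_t$ stays $\epsilon$--close to $\Sigma$ on $I$. By the $\cH$--invariance of MCF, $\tilde N_t$ is itself a rescaled MCF of $M$ (centered at a new space-time point), so the Lyapunov monotonicity of $F$ and $\lambda$ applies to it. The parameters $(y_0, 1-b^2 e^{-T_2}, b)$ are engineered precisely so that, via \eqref{e:rescaled}, the original-MCF time read off from $M_{a^{-2}(t_0 - e^{-t})}$ equals $-e^{-T_2}$ at $t=0$ and $-e^{-T_1}$ at $t=t_s$. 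Consequently the endpoint configurations $\tilde N_0$ and $\tilde N_{t_s}$ are $\cH$--equivalent (via $(b,g,y_0)$) to $bg(\tilde M_{T_2}+y_0)$ and to $\tilde M_{T_1}$ respectively. Since the hypothesis puts both of these in $B_\delta(\Sigma)$, the $\cH$--invariance of $\lambda$ gives $|\lambda(\tilde N_0)-\lambda(\Sigma)|,\ |\lambda(\tilde N_{t_s})-\lambda(\Sigma)|\le C\delta$; monotonicity of $\lambda$ along $\tilde N_t$ then forces $|\lambda(\tilde N_t)-\lambda(\Sigma)|\le C\delta$ throughout $I$, and hence the total $F$--drop $F(\tilde N_{t_s}) - F(\tilde N_0) \le C\delta$ once $\tilde N_t$ stays in a neighborhood where $F$ and $\lambda$ agree up to $o_\delta(1)$.

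With the $F$--drop controlled, the conclusion follows by a \L ojasiewicz--Simon--Schulze bootstrap using Proposition \ref{p:Fsmall}. Let $\cO_\Sigma$ be the neighborhood from that proposition and define
\begin{equation*}
	T^{\star} := \sup\{T \in I : \tilde N_t \in \cO_\Sigma \text{ for all } t \in [t_s, T]\}.
\end{equation*}
On $[t_s, T^{\star}]$, Proposition \ref{p:Fsmall} applied to $\tilde N_t$ gives
\begin{equation*}
	\int_\Sigma |u(\cdot, T^{\star}) - u(\cdot, t_s)|\, d\mu \,\le\, C\,[F(\tilde N_{t_s}) - F(\tilde N_{T^{\star}})]^{\beta} \,\le\, C\,\delta^{\beta}.
\end{equation*}
Standard interior parabolic regularity for the quasilinear rescaled MCF equation upgrades the $L^1$--bound to $C^{2,\alpha}$, placing $\tilde N_{T^{\star}}$ within $C\delta^{\beta}$ of $\tilde N_{t_s}$ and hence within $\delta + C\delta^{\beta}$ of $\Sigma$. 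For $\delta$ small enough relative to $\epsilon$ and the radius of $\cO_\Sigma$, this forces $\tilde N_{T^{\star}}$ strictly inside $\cO_\Sigma$; by continuity of $t \mapsto \tilde N_t$ this contradicts maximality of $T^{\star}$ unless $T^{\star} = 0$. Applying the same integral estimate on $[t_s, t]$ for arbitrary $t \in I$ then yields $\tilde N_t \in B_\epsilon(\Sigma)$, which is the claim.

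The main obstacle I expect is the endpoint identification in the first step: showing rigorously that the $\cH$--transforms $(b,g,y_0)$ built into \eqref{e:rescaled} really do match $\tilde N_0$ with $bg(\tilde M_{T_2}+y_0)$ and $\tilde N_{t_s}$ with $\tilde M_{T_1}$ in a way that both makes $\lambda(\tilde N_t)$ close to $\lambda(\Sigma)$ at the endpoints and makes $\tilde N_t$ close to $\Sigma$ itself (not merely close to the orbit $\cH(\Sigma)$). The key ingredient is that when $\lambda$ is within $C\delta$ of $\lambda(\Sigma)$ and the flow is already near the orbit of $\Sigma$, a compactness argument forces proximity to $\Sigma$ modulo an $\cH$--element close to the stabilizer --- which can then be absorbed into the choice of parameters $(y_0, 1-b^2e^{-T_2}, b)$ and $g$. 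Once this setup is in place, the bootstrap via Proposition \ref{p:Fsmall} is the standard Łojasiewicz argument.
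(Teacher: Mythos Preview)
Your overall strategy matches the paper's: reparametrize to the flow $\tilde N_t=g\,\tilde M_t(y_0,1-b^2\e^{-T_2},b)$ (the paper calls it $\bar M_t$), pin down $F$ at the two endpoints, and then invoke Proposition~\ref{p:Fsmall}. Two steps in your argument need sharpening, and the paper's treatment of both is worth noting.

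First, your control of the $F$--drop is circular: you pass from $\lambda$--monotonicity to an $F$--drop bound ``once $\tilde N_t$ stays in a neighborhood where $F$ and $\lambda$ agree up to $o_\delta(1)$'', but staying in such a neighborhood is precisely what you are trying to prove. The paper avoids this. At $t=0$ one has $\tilde N_0=b\,g(\tilde M_{T_2}+y_0)\in B_\delta(\Sigma)$ \emph{directly}, so continuity of $F$ gives $F(\tilde N_0)>F(\Sigma)-\kappa(\delta)$. At $t=t_s$ one only knows $\tilde N_{t_s}$ is an $\cH$--image of $\tilde M_{T_1}$; here the paper uses the universal inequality $F\leq\lambda$, the $\cH$--invariance of $\lambda$, and lemma~7.10 of \cite{CM1} to get $F(\tilde N_{t_s})\leq\lambda(\tilde N_{t_s})=\lambda(\tilde M_{T_1})\leq F(\Sigma)+\kappa(\delta)$. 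Monotonicity of $F$ (not $\lambda$) along the rescaled flow then yields the two--sided bound $|F(\tilde N_t)-F(\Sigma)|\leq 2\kappa(\delta)$ on all of $[t_s,0]$, with no assumption on where $\tilde N_t$ lives.

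Second, your bootstrap via $T^\star$ needs $\tilde N_{t_s}\in\cO_\Sigma$ to get started, but as you correctly flag, $\tilde N_{t_s}$ is a priori only close to the orbit $\cH(\Sigma)$. You propose to ``absorb'' the offending $\cH$--element into the parameters $(y_0,1-b^2\e^{-T_2},b,g)$; however, those parameters are fixed by the statement of the proposition, so this move is not available. The paper's resolution is again lemma~7.10 of \cite{CM1}: for a closed shrinker, $F$ restricted to the translation--dilation orbit of $\Sigma$ has a strict local maximum at $\Sigma$ itself. Since $\tilde N_{t_s}$ is close to some translate--dilate $h\cdot\Sigma$ and (from the first step) $F(\tilde N_{t_s})$ is within $2\kappa(\delta)$ of $F(\Sigma)$, the element $h$ must be close to the identity, forcing $\tilde N_{t_s}\in B_{\delta_0}(\Sigma)$ for $\delta$ small. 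From there the paper uses short--time continuity of the flow near the fixed point $\Sigma$ (time $\leq 1$ maps $B_{\delta_0}(\Sigma)$ into $B_{\epsilon_0/2}(\Sigma)$) followed by Proposition~\ref{p:Fsmall} and interpolation, which is essentially your $T^\star$ continuation argument executed cleanly.
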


\begin{proof}
For simplicity, we will assume that $g$ is the identity, so that 
 $M_{-\e^{-T_1}}/\sqrt{\e^{-T_1}}$ and $b\,( M_{-\e^{-T_2}}/\sqrt{\e^{-T_2}} + y_0)$  are $\delta$-close to $\Sigma$. 
The general case follows with obvious modifications.

  Consider   the rescaled flow $\bar M_t = \tilde M_t(x_0 , t_0 ,a)$ given by \eqr{e:rescaled} with 
  \begin{align}
   x_0 = y_0 , \,  t_0 = 1 - b^2 \, \e^{-T_2} \, ,  {\text{ and }} a = b   \, .
  \end{align}
  Time $t$ for the new rescaled flow corresponds to time 
  \begin{align}
  	a^{-2}\,(t_0- \e^{-t}) =  b^{-2} \left(  1- \e^{-t} \right)    -   \e^{-T_2}   
\end{align}
 for the MCF.  In particular,   $0$ corresponds to time $- \e^{-T_2}$ and  $\bar M_0$ is $\delta$-close to $\Sigma$ since
  \begin{align}	\label{e:Mzero}
  	\bar M_0 &= b\,( M_{-\e^{-T_2}}/\sqrt{\e^{-T_2}} + y_0) \, .
  \end{align}
  Moreover, time $\bar{T} = -\log \left\{ 1 - b^2 (\e^{-T_2} - \e^{-T_1}) \right\}$ for the new rescaled flow corresponds to $\e^{-T_1}$ for the MCF and, thus,
  $\bar M_{\bar{T}}$ is a translation and dilation of $M_{-\e^{-T_1}}$.   
  
  From \cite{CM1}, $
F(\Sigma)=\lambda(\Sigma)$  and, since $\Sigma$ is compact,   lemma $7.10$ in \cite{CM1}
gives
 \begin{equation}
 	\lambda (M_{-\e^{-T_1}}) =  \lambda (M_{-\e^{-T_1}}/\sqrt{\e^{-T_1}}) \leq F(\Sigma) +  \kappa (\delta) \, , 
\end{equation} 
 where $\kappa$ is   continuous  with $\kappa (0) = 0$.  Since $\lambda$ is invariant under dilations and translations, 
\begin{align}
	F(\bar M_{\bar{T}}) \leq \lambda (\bar M_{\bar{T}}) = \lambda (M_{-\e^{-T_1}}) \leq F(\Sigma) + \kappa (\delta) \, .
\end{align}
Similarly,   \eqr{e:Mzero} yields $F(\bar M_0) > F(\Sigma) - \kappa (\delta)$.  Since $F$ is monotone for rescaled MCF,  
\begin{align}	\label{e:Fclose}
	\sup_{t \in [\bar{T} , 0]} \, \left| F( \Sigma) - F(\bar{M}_t ) \right| \leq 2 \, \kappa (\delta) \, .
\end{align}
Since $F$ does not change much, Proposition \ref{p:Fsmall}  gives that $\bar{M}_t$ does not change much either.  To be precise, 
  fix   $\epsilon_0 < \frac{\epsilon}{2}$ small enough that Proposition \ref{p:Fsmall}  applies in $B_{\epsilon_0}(\Sigma)$.  Since the flow is continuous and $\Sigma$ is a fixed point,  there exists
  $\delta_0 > 0$   so that the time (at most{\footnote{See \cite{CM6} for the continuity of the time one flow; to get this also for the time at most one flow, replace the interior Schauder estimates in \cite{CM6} by the boundary estimate (see, e.g., theorem $4.29$ in \cite{L}).}}) one rescaled flow maps $B_{\delta_0}(\Sigma)$ to $B_{\frac{\epsilon_0}{2}} (\Sigma)$.  Since $\bar{M}_{\bar{T}}$ is close to $\Sigma$ up to translation and scaling,  lemma $7.10$ in \cite{CM1} and
  \eqr{e:Fclose} imply that $\bar{M}_{\bar{T}} \in B_{\delta_0}(\Sigma)$ if $\delta$ is small enough.  We have
  \begin{align}	
  	\bar{M}_t \in B_{\frac{\epsilon_0}{2}} (\Sigma) {\text{ for }} t \in [\bar{T} , \bar{T} + 1] \, .
  \end{align}
  Proposition \ref{p:Fsmall}, higher order interior estimates  and interpolation (cf. page $141$ of \cite{GT})  gives  $\bar{M}_t \in B_{{\epsilon_0}} (\Sigma) $  for $t\in[ \bar{T} + 1 , 0]$.
 \end{proof}

   \section{The proof of Proposition \ref{p:Fsmall}}   \label{s:s2}

We will need   expressions for geometric quantities for a graph $\Upsilon(u)$ of a function $u$ over   $\Sigma$.
We will assume that $|u|$ is small so  $\Upsilon (u)$ is contained in a tubular neighborhood of $\Sigma$ where the normal exponential map is invertible.   Let $e_{n+1}$ be the gradient of the signed distance function to $\Sigma$.
The geometric quantities that we need to compute on $\Upsilon (u)$   are:
\begin{itemize}
\item The relative area element $\nu_u (p) = \sqrt{\det g^u_{ij}(p)}/ \sqrt{\det g_{ij}(p)}$, where $g_{ij}(p)$ is the metric for $\Sigma$ at $p$ and
 $g^u_{ij}(p)$ is the pull-back metric from the graph of $u$ at $p$.
 \item The mean curvature $H_u(p)$ of $\Upsilon (u)$ at $(p+ u(p) \, \nn(p))$.
 \item The support function $\eta_u (p) = \langle p + u (p) \, \nn (p) , \nn_u \rangle$, where $\nn_u$ is the normal to $\Upsilon (u)$.
 \item The speed function $w_u (p) = \langle e_{n+1} , \nn_u \rangle^{-1}$ evaluated at the point $p + u (p) \, \nn(p)$.
\end{itemize}

\vskip2mm
The next lemma from \cite{CM3} (lemma $A.3$ there) gives the expressions for the $\nu_u$, $\eta_u$ and $w_u$ on a   graph $\Sigma_u$ over a general hypersurface
$\Sigma$:

\begin{Lem} \cite{CM3}	\label{l:areau}
There are   functions $w, \nu , \eta$ depending  on $(p , s , y)  \in \Sigma \times \RR \times T_p\Sigma$  that are smooth for $|s|$ sufficiently small and depend smoothly on $\Sigma$ so that:
\begin{itemize}
\item The speed function is given by $w_u (p) = w(p, u (p) , \nabla u (p))$.
\item   
The relative area element is given by $\nu_u (p) = \nu (p ,u (p) , \nabla u(p))$.  
\item The support function is given by $\eta_u (p) = \eta (p, u(p) , \nabla u (p))$.
\end{itemize}
In addition, the ratio $\frac{w}{\nu}$ depends only on $p$ and $s$.
Finally, the functions $w$, $\nu$, and $\eta$ satisfy:
\begin{itemize}
\item $w(p,s,0) \equiv 1$,  $\partial_s w(p,s,0) = 0$, $\partial_{y_{\alpha}}  w  (p,s,0) = 0$, and $ 
\partial_{y_{\alpha}} \partial_{  y_{\beta} }w (p,0,0) = \delta_{\alpha \beta}$.
\item  $\nu(p,0,0) =1$; the only non-zero first and second order terms are $ \partial_s \nu  (p,0,0) = H(p)$,
$ \partial_{p_j} \partial_s \nu  (p,0,0) = H_j(p)$,
$\partial_s^2 \nu(p,0,0) = H^2 (p) - |A|^2 (p)$, and 
$\partial_{y_{\alpha}} \partial_{  y_{\beta} }  \nu  (p,0,0) = \delta_{\alpha \beta}$.
\item $\eta (p,0, 0) = \langle p , \nn \rangle$, $ \partial_s  \eta  (p,0,0) = 1$, and $ \partial_{  y_{\alpha}} \eta (p,0,0) = - p_{\alpha}$.
\end{itemize}
\end{Lem}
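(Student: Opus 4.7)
\textbf{Proof Proposal for Lemma \ref{l:areau}.}

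The plan is to write everything down explicitly in a Fermi-type parametrization based on $\Sigma$, so that all three quantities $w_u, \nu_u, \eta_u$ come out as closed-form expressions in the shape operator $A$ of $\Sigma$, the value $s = u(p)$, and the gradient $y = \nabla u(p)$. Smoothness in $(p,s,y)$ will then be automatic on any tubular neighborhood, and the Taylor expansions at $(s,y)=(0,0)$ will reduce to one-variable differentiations that can be read off by inspection.

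Concretely, I would first fix a point $p \in \Sigma$, work in a local orthonormal frame $\{e_i\}$ diagonalizing (or at least centered at) $A$, and compute the pushforward of the graph map $F(p) = p + u(p)\,\nn(p)$ as $F_* e_i = e_i + u_i\,\nn + u\,\nabla_{e_i}\nn = (I+uA)(e_i) + u_i\,\nn$. This immediately gives the pulled-back metric
\begin{equation*}
g^u_{ij} = \bigl((I+uA)^2\bigr)_{ij} + u_i u_j \, .
\end{equation*}
Using the rank-one identity $\det(M + v v^T) = \det(M)(1 + v^T M^{-1} v)$ with $M = (I+uA)^2$, one gets the clean formula
\begin{equation*}
\nu_u(p) = \det\!\bigl(I + u(p)A(p)\bigr)\,\sqrt{1 + \nabla u^T (I+uA)^{-2} \nabla u} \, .
\end{equation*}
For $w_u$ and $\eta_u$, I would solve the orthogonality relations $\langle \nn_u, F_* e_i\rangle = 0$ for a unit vector $\nn_u = \sum a_j e_j + b\,\nn$; this yields $a = -b(I+uA)^{-1}\nabla u$ and $b = \bigl(1+|(I+uA)^{-1}\nabla u|^2\bigr)^{-1/2}$. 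Since $e_{n+1}$ at $p+u\nn(p)$ equals $\nn(p)$ (the signed distance from $\Sigma$ is just $u$ in Euclidean space), one reads off $w_u(p) = 1/b = \sqrt{1 + |(I+uA)^{-1}\nabla u|^2}$ and $\eta_u(p) = b\bigl[\langle p,\nn\rangle + u - \nabla u \cdot (I+uA)^{-1} p^T\bigr]$, where $p^T$ is the tangential part of $p$. These are the desired $w(p,s,y)$, $\nu(p,s,y)$, $\eta(p,s,y)$, all smooth in $(s,y)$ for $s$ small enough that $I + sA$ is invertible (i.e., inside the tubular neighborhood), and smooth in $\Sigma$ since $A$ is. The identity $\nu/w = \det(I+sA)$ is now visible from the formulas and depends only on $(p,s)$.

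With the closed-form expressions in hand, the Taylor coefficients are pure bookkeeping. For $w$: along $y=0$ one has $w(p,s,0)\equiv 1$, which kills $\partial_s w$ and $\partial_{y_\alpha} w$ at $y=0$, and $\partial_{y_\alpha}\partial_{y_\beta}w(p,0,0)$ reduces to the Hessian of $\sqrt{1+|y|^2}$ at the origin, giving $\delta_{\alpha\beta}$. For $\nu$: using $\nu = \det(I+sA)\,w$, along $y=0$ one has $\nu(p,s,0) = \det(I+sA(p))$, so $\partial_s\nu(p,0,0) = \mathrm{tr}\,A = H$, and the classical expansion $\det(I+sA) = 1 + s H + \tfrac{s^2}{2}(H^2 - |A|^2) + O(s^3)$ gives $\partial_s^2 \nu(p,0,0) = H^2 - |A|^2$, while differentiating $\partial_s\nu(p,s,0)\big|_{s=0} = H(p)$ in $p_j$ gives $H_j$. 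For $\eta$: at $s=0$ one has $\eta(p,0,y) = (1+|y|^2)^{-1/2}\bigl[\langle p,\nn\rangle - y\cdot p^T\bigr]$, from which $\eta(p,0,0) = \langle p,\nn\rangle$ and $\partial_{y_\alpha}\eta(p,0,0) = -p_\alpha$, and along $y=0$ one has $\eta(p,s,0) = \langle p,\nn\rangle + s$, giving $\partial_s\eta(p,0,0) = 1$.

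The only real subtlety, and what I expect to be the mild obstacle, is being careful about conventions (sign of $A$, orientation of $\nn_u$, meaning of $p^T$) and checking that the rank-one determinant identity and the inversion of $I+uA$ are legitimate on a uniform tubular neighborhood of $\Sigma$; neither is deep, but both are easy to get slightly wrong. Once those are pinned down, everything reduces to the two algebraic identities $g^u = (I+uA)^2 + \nabla u \otimes \nabla u$ and $b^{-2} = 1 + |(I+uA)^{-1}\nabla u|^2$, and the Taylor data at $(0,0)$ drops out.
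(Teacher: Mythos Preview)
Your proposal is correct and self-contained. Note, however, that the paper does not actually prove this lemma: it is quoted verbatim from \cite{CM3} (lemma~A.3 there), so there is no ``paper's own proof'' to compare against here. Your Fermi-coordinate computation is exactly the sort of argument that underlies the cited result, and the closed-form expressions
\[
\nu(p,s,y)=\det(I+sA)\sqrt{1+\bigl|(I+sA)^{-1}y\bigr|^2},\qquad
w(p,s,y)=\sqrt{1+\bigl|(I+sA)^{-1}y\bigr|^2},
\]
together with the formula for $\eta$, make every claimed Taylor coefficient readable by inspection, including the vanishing of the unlisted first- and second-order terms of $\nu$ (which you do not state explicitly but which follow immediately from $\partial_{y_\alpha}\nu|_{y=0}\equiv 0$ for all $s$). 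The identity $\nu/w=\det(I+sA)$ is the cleanest way to see the ``depends only on $(p,s)$'' assertion. Your caveat about sign conventions is well placed: with the paper's convention $H=\dv\,\nn$, one has $\nabla_{e_i}\nn=A(e_i)$ and $\Tr\,A=H$, which is consistent with what you wrote.
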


Suppose now that $\Sigma \subset \RR^{n+1} $ is an    embedded shrinker and $u(p,t)$ is a  smooth function on $\Sigma \times (-\epsilon , \epsilon)$.  This gives a one-parameter family of hypersurfaces $\Upsilon (u)$.   Lemma $A.44$ in \cite{CM3} computes the graphical rescaled MCF equation:

\begin{Lem}	\label{l:normpart1}
The graphs $\Upsilon (u)$ satisfy the rescaled MCF equation if and only if $u$ satisfies
\begin{align}
	 \partial_t u(p,t)  =  w (p, u(p,t) , \nabla u (p,t)) \,  \left( \frac{1}{2} \, \eta (p, u(p,t) , \nabla u(p,t)) - H_u \right)   \equiv \cM \, u \, .
\end{align}
\end{Lem}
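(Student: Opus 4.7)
The statement is purely geometric: it asserts that the normal component of the parametric velocity of the graph $X(p,t) = p + u(p,t)\,\nn(p)$ matches the rescaled MCF normal speed $\tfrac12 \langle X, \nn_u\rangle - H_u$. The plan is therefore to compute each factor appearing in $\cM u$ from this identification.

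First I would note that a one-parameter family of hypersurfaces satisfies rescaled MCF iff its normal velocity (an intrinsic, parametrization-independent quantity) equals $\tfrac12 \langle x, \nn_u\rangle - H_u$. Applying this to the parametrization $X(p,t) = p + u(p,t)\,\nn(p)$ of $\Upsilon(u(\cdot,t))$, one computes
\begin{equation*}
X_t(p,t) = \partial_t u(p,t)\,\nn(p),
\end{equation*}
so the normal speed of $\Upsilon(u)$ in the direction $\nn_u$ is $\partial_t u(p,t)\,\langle \nn(p), \nn_u\rangle$. Hence the graphs solve rescaled MCF iff
\begin{equation*}
\partial_t u(p,t)\,\langle \nn(p), \nn_u\rangle = \frac{1}{2}\,\langle X(p,t), \nn_u\rangle - H_u.
\end{equation*}

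Next I would identify the two factors on the left and right with the functions $w$, $\eta$ of Lemma \ref{l:areau}. By definition of $\eta_u$,
\begin{equation*}
\frac{1}{2}\,\langle X(p,t), \nn_u\rangle = \frac{1}{2}\,\eta_u(p) = \frac{1}{2}\,\eta(p, u(p,t), \nabla u(p,t)).
\end{equation*}
For the left factor, the key observation is that the gradient $e_{n+1}$ of the signed distance function to $\Sigma$ at the point $p + u(p)\,\nn(p)$ is precisely $\nn(p)$, since the normal geodesics of $\Sigma \subset \RR^{n+1}$ are the Euclidean lines $s \mapsto p + s\,\nn(p)$ along which the signed distance increases at unit rate. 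Consequently
\begin{equation*}
w_u(p) = \langle e_{n+1}, \nn_u\rangle^{-1} = \langle \nn(p), \nn_u\rangle^{-1},
\end{equation*}
so $\langle \nn(p), \nn_u\rangle = w_u(p)^{-1} = w(p, u(p,t), \nabla u(p,t))^{-1}$.

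Substituting these two identifications into the intermediate equation and solving for $\partial_t u$ yields
\begin{equation*}
\partial_t u(p,t) = w(p, u, \nabla u)\left(\frac{1}{2}\,\eta(p, u, \nabla u) - H_u\right),
\end{equation*}
which is exactly $\cM u$, and every step is reversible, giving the ``if and only if.'' The only subtlety is the identification $e_{n+1}(p + u(p)\nn(p)) = \nn(p)$; this requires that $|u|$ is small enough that the point lies in the tubular neighborhood where the normal exponential map is a diffeomorphism, which is the standing smallness assumption above Lemma \ref{l:areau}. No further computation of $H_u$ is needed since the statement keeps $H_u$ symbolic; the other two factors are precisely the content of the identifications just given.
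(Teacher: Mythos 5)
Your proof is correct and is the standard derivation. Note that the paper itself does not reprove this lemma but cites it as Lemma A.44 of \cite{CM3}; your argument---taking the normal component of $X_t = \partial_t u \,\nn(p)$ with respect to $\nn_u$, identifying $\tfrac12\langle X,\nn_u\rangle = \tfrac12\eta_u$, and observing that $e_{n+1}$ evaluated at $p+u(p)\nn(p)$ equals $\nn(p)$ (since $p$ is the nearest point on $\Sigma$), so that $\langle \nn(p),\nn_u\rangle = w_u^{-1}$---is precisely how that computation goes, and the ``if and only if'' is correctly justified by noting MCF is a normal (parametrization-invariant) condition.
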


Using this, we can compute the linearization of $\cM$:
 
\begin{Cor}	\label{c:line}
The linearization of $\cM u$ at $u=0$ is given by
\begin{equation}
	\frac{d}{dr} \, \big|_{r=0} \, \cM (r\, u) = \Delta \, u + |A|^2 \, u - \frac{1}{2} \, \langle p , \nabla u \rangle + \frac{1}{2} \, u 
	= L \, u \, , 
\end{equation}
where $L$ is the second variation operator for the $F$ functional from section $4$ of 
\cite{CM1}.
\end{Cor}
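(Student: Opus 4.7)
The plan is to differentiate the formula for $\cM u$ from Lemma \ref{l:normpart1} at $u=0$ via the product rule, and to exploit crucially that $\Sigma$ is a shrinker so the constant factor $\tfrac12 \eta - H$ vanishes at $u=0$. Concretely, write $\cM u = w(p,u,\nabla u)\bigl(\tfrac12 \eta(p,u,\nabla u) - H_u\bigr)$. Since $\Sigma$ is a shrinker, it satisfies $H = \tfrac12 \langle p,\nn\rangle$, and Lemma \ref{l:areau} gives $\eta(p,0,0) = \langle p,\nn\rangle$, so
\begin{equation*}
\tfrac12 \eta(p,0,0) - H_0 = \tfrac12 \langle p,\nn\rangle - H = 0 \, .
\end{equation*}
Consequently, when differentiating by the product rule, the term involving $\tfrac{d}{dr}\big|_{r=0} w(p,ru,r\nabla u)$ drops out entirely. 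Combined with $w(p,0,0)=1$ from Lemma \ref{l:areau}, the linearization reduces to
\begin{equation*}
\tfrac{d}{dr}\big|_{r=0} \cM(ru) \;=\; \tfrac12 \, \tfrac{d}{dr}\big|_{r=0} \eta(p,ru,r\nabla u) \;-\; \tfrac{d}{dr}\big|_{r=0} H_{ru} \, .
\end{equation*}

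Next I handle the two remaining derivatives. For $\eta$, the chain rule and the explicit first derivatives from Lemma \ref{l:areau}, namely $\partial_s \eta(p,0,0)=1$ and $\partial_{y_\alpha} \eta(p,0,0) = -p_\alpha$, give
\begin{equation*}
\tfrac{d}{dr}\big|_{r=0} \eta(p,ru,r\nabla u) = u - \langle p,\nabla u\rangle \, .
\end{equation*}
For $H_u$, I invoke the standard formula for the linearization of mean curvature under a normal graphical variation,
\begin{equation*}
\tfrac{d}{dr}\big|_{r=0} H_{ru} = -\Delta u - |A|^2 u \, ,
\end{equation*}
which holds for any embedded hypersurface. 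Adding the contributions gives exactly $\Delta u + |A|^2 u - \tfrac12\langle p,\nabla u\rangle + \tfrac12 u$, which is the operator $L$ from \cite{CM1}.

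The only step that is not immediate from the lemmas already proved in the excerpt is the linearization of $H_u$; that is the place where the main work actually sits. It can be justified either by quoting the classical normal-graph formula (see e.g. standard references on MCF), or derived internally from the expansion of $\nu$ in Lemma \ref{l:areau} by differentiating the area functional twice: the first variation of $\int \nu_u\, d\mu$ in the direction $u$ is $-\int H u \, d\mu$, and comparison of the quadratic term with the expansion $\partial_s \nu = H$, $\partial_s^2 \nu = H^2 - |A|^2$, $\partial_{y_\alpha}\partial_{y_\beta} \nu = \delta_{\alpha\beta}$ recovers the Jacobi-type operator $\Delta u + |A|^2 u$. Either route yields the formula, after which the identification with $L$ is a matter of reading off definitions from section $4$ of \cite{CM1}.
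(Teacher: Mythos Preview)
Your proposal is correct and is exactly the computation the paper has in mind: the corollary is stated without proof, introduced only by ``Using this, we can compute the linearization of $\cM$,'' so the intended argument is precisely the product-rule differentiation of $\cM u = w\,(\tfrac12\eta - H_u)$ using the expansions in Lemma~\ref{l:areau}, together with the vanishing of $\tfrac12\eta - H$ at $u=0$ from the shrinker equation and the standard linearization $\frac{d}{dr}\big|_{r=0} H_{ru} = -\Delta u - |A|^2 u$. Your treatment of the $H_u$ term (either quoting the classical formula or recovering it from the second-order expansion of $\nu$) is fine and fills in the one step the paper leaves implicit.
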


  We will also use   the following elementary
calculus lemma:

\begin{Lem}	\label{l:calculus}
Let $\bar{U}$ be a $C^1$ function of $(p,s,y)$.  If $u$ and $v$ are $C^1$ functions on $\Sigma$, then 
\begin{align}
	\left| \bar{U} (p,u(p), \nabla u(p)) - \bar{U} (p,v(p), \nabla v(p)) \right| &\leq
	C_{\bar{U}} \, \left( \left| u(p) - v(p) \right|  +   \left| \nabla u (p) - \nabla v (p) \right| \right) \, ,
\end{align}
where $C_{\bar{U}} = \sup \{ \left| \partial_s \bar{U} \right| + \left| \partial_{y_{\alpha}} \bar{U} \right|
\, \big|   \, |s| + |y| \leq \| u \|_{C^1} + \| v \|_{C^1} \}$.
\end{Lem}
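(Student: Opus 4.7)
The plan is to prove this by a one-line application of the fundamental theorem of calculus along the straight-line path in the $(s,y)$-variables connecting $(u(p),\nabla u(p))$ to $(v(p),\nabla v(p))$. Since $\bar{U}$ is only a function of $(p,s,y)$ and $p$ is held fixed, this reduces to a standard $C^1$ Lipschitz estimate on a Euclidean segment.

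First I would fix $p \in \Sigma$ and define, for $\tau \in [0,1]$,
\begin{equation}
\gamma(\tau) = \bigl(p,\, (1-\tau)v(p) + \tau u(p),\, (1-\tau)\nabla v(p) + \tau \nabla u(p)\bigr),
\end{equation}
and write
\begin{equation}
\bar{U}(p,u(p),\nabla u(p)) - \bar{U}(p,v(p),\nabla v(p)) = \int_0^1 \frac{d}{d\tau}\, \bar{U}(\gamma(\tau))\, d\tau.
\end{equation}
By the chain rule, the integrand is
\begin{equation}
\partial_s \bar{U}(\gamma(\tau)) \cdot (u(p)-v(p)) + \sum_{\alpha} \partial_{y_{\alpha}} \bar{U}(\gamma(\tau)) \cdot (\nabla u(p) - \nabla v(p))_{\alpha}.
\end{equation}

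Next I would check that the path $\gamma(\tau)$ stays inside the region where the sup defining $C_{\bar{U}}$ is taken. Along the segment, the second coordinate is bounded in absolute value by $\max(|u(p)|,|v(p)|)$ and the third by $\max(|\nabla u(p)|,|\nabla v(p)|)$, so their sum is at most $\|u\|_{C^1} + \|v\|_{C^1}$. Hence at every $\tau$ we have $|\partial_s \bar{U}(\gamma(\tau))| + |\partial_{y_{\alpha}} \bar{U}(\gamma(\tau))| \leq C_{\bar{U}}$. Substituting this bound and taking absolute values under the integral gives the claimed inequality with constant $C_{\bar{U}}$.

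There is no real obstacle here; the only thing to be mildly careful about is that the convex combination keeping $|s|+|y|$ within $\|u\|_{C^1}+\|v\|_{C^1}$ is what justifies using precisely the sup appearing in the statement. Everything else is elementary calculus, and no geometric input from $\Sigma$ beyond the definition of $\nabla u$ is needed.
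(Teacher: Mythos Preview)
Your proof is correct; this is precisely the standard fundamental-theorem-of-calculus argument along the segment in the $(s,y)$-variables, and your check that the segment stays in the region $|s|+|y|\le \|u\|_{C^1}+\|v\|_{C^1}$ is exactly what is needed to justify the constant $C_{\bar U}$. The paper itself does not give a proof of this lemma---it is simply labeled an ``elementary calculus lemma'' and left to the reader---so there is nothing further to compare.
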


\subsection{Frechet differentiability}	\label{s:B}

In this subsection, we prove   Frechet differentiability of quasilinear elliptic operators.  This  is standard, but we were unable to
 locate a reference.  Recall   that if
 $X$ and $Y$ are Banach spaces and 
  $\Psi: X \to Y$ is continuous, then    $\Psi$ is Frechet differentiable at $x_0 \in X$ if there is a bounded linear map
  $A_{x_0}: X \to Y$ so that:
  \begin{enumerate}
  \item[] Given $\epsilon > 0$, there exists $\delta > 0$ so that if $|x|_{X} < \delta$, then 
  \begin{equation}
  	\left| \Psi (x_0 + x) - \Psi (x_0) - A_{x_0} \, x \right|_Y \leq \epsilon \, |x|_X \, .
  \end{equation}
  \end{enumerate}

 Throughout, $\Sigma$ is a closed manifold and $\cN$ is a quasilinear elliptic operator given by
 \begin{equation}	\label{e:cNapp}
 	\cN (u) = a_{ij} (p,u,\nabla u) \, u_{ij} + \Omega (p, u , \nabla u) \, , 
\end{equation}
where $a_{ij}$ and $\Omega$ are $C^3$ functions of $(p,s,y) \in \Sigma \times \RR \times T_p \Sigma$ and $a_{ij}$ is positive definite.
In particular,   $ \cN $ is continuous as a map from $C^{2,\alpha} \to C^{\alpha}$.

\begin{Lem}	\label{l:frechet}
$\cN$ is Frechet differentiable and the  derivative at $u$ is the linearized operator
\begin{align}
	L_u (v) &= a_{ij} (p,u,\nabla u) \, v_{ij} + \left( u_{ij} \,  (\partial_{y_{\alpha}} a_{ij})(p, u , \nabla u)\,  
	+  (\partial_{y_{\alpha}} \Omega)(p,u,\nabla u) \right) \, v_{\alpha} \notag \\
	&\qquad + \left( u_{ij} \,  
	(\partial_s a_{ij})(p, u , \nabla u) + (\partial_s \Omega) (p, u , \nabla u)  \right) \, v 
	\, .
\end{align}
\end{Lem}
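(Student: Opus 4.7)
The plan is to compute $\cN(u+v) - \cN(u) - L_u(v)$ directly, identify it as a sum of explicitly quadratic remainders in $v$, and then bound each remainder in the $C^\alpha$ norm by $C\,\|v\|_{C^{2,\alpha}}^2$, which will be $o(\|v\|_{C^{2,\alpha}})$ as required by the definition of Frechet differentiability.

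First, I would split the increment $\cN(u+v)-\cN(u)$ into four pieces by writing
\begin{align*}
	\cN(u+v) &= a_{ij}(p,u+v,\nabla u+\nabla v)\,(u_{ij}+v_{ij}) + \Omega(p,u+v,\nabla u+\nabla v) \, ,
\end{align*}
adding and subtracting $a_{ij}(p,u,\nabla u)\,v_{ij}$ and $a_{ij}(p,u,\nabla u)\,u_{ij}$. After regrouping, the terms whose total order in $v$ is one produce precisely $L_u(v)$ once we Taylor expand $a_{ij}$ and $\Omega$ to first order in $(s,y)$ around $(u(p),\nabla u(p))$. The remaining terms fall into two types: a cross term
\begin{equation*}
	\bigl[a_{ij}(p,u+v,\nabla u+\nabla v) - a_{ij}(p,u,\nabla u)\bigr]\,v_{ij} \, ,
\end{equation*}
and two Taylor remainders of the form
\begin{equation*}
	R^{ij}(p,v,\nabla v)\, u_{ij} + R^\Omega(p,v,\nabla v) \, ,
\end{equation*}
where $R^{ij}$ and $R^\Omega$ are the second-order Taylor remainders of $a_{ij}(p,\cdot,\cdot)$ and $\Omega(p,\cdot,\cdot)$ expanded around $(u(p),\nabla u(p))$.

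Next I would bound each remainder in $C^\alpha$. For the cross term, Lemma \ref{l:calculus} applied to $\bar{U}=a_{ij}$ gives a pointwise bound by $C\,(|v|+|\nabla v|)\,|v_{ij}|$, and an analogous argument controls the $\alpha$-Hölder seminorm using that $a_{ij}\in C^3$ and $(u,\nabla u)\in C^{1,\alpha}$; this yields $C_u\,\|v\|_{C^{1,\alpha}}\,\|v\|_{C^{2,\alpha}} \leq C_u\,\|v\|_{C^{2,\alpha}}^2$. For the Taylor remainders, since $a_{ij}$ and $\Omega$ are $C^3$ jointly in $(p,s,y)$, the integral form of the Taylor remainder expresses $R^{ij}$ and $R^\Omega$ as quadratic expressions in $(v,\nabla v)$ with coefficients that are $C^2$ functions of $(p,u+\tau v,\nabla u+\tau \nabla v)$ integrated over $\tau\in[0,1]$. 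Using that $C^\alpha$ is a Banach algebra on the closed manifold $\Sigma$, and that composition $F\mapsto F(p,u+\tau v,\nabla u+\tau\nabla v)$ maps bounded sets of $C^2(\Sigma\times\RR\times T\Sigma)$ into bounded sets of $C^\alpha(\Sigma)$, the $C^\alpha$ norms of $R^{ij}$ and $R^\Omega$ are bounded by $C_u\,\|v\|_{C^{1,\alpha}}^2$, and multiplying by $\|u_{ij}\|_{C^\alpha}$ keeps the estimate quadratic in $\|v\|_{C^{2,\alpha}}$.

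The main obstacle is purely a bookkeeping one: making sure the $C^\alpha$ estimates on the Taylor remainder coefficients depend only on $\|u\|_{C^{2,\alpha}}$ and on fixed $C^3$ bounds for $a_{ij}$ and $\Omega$ on a compact neighborhood of the $1$-jet of $u$, so that the constant $C_u$ can be taken uniform in a $C^{2,\alpha}$ ball. Once that is in place, choosing $\delta>0$ so that $C_u\,\delta<\epsilon$ gives $\|\cN(u+v)-\cN(u)-L_u(v)\|_{C^\alpha}\leq \epsilon\,\|v\|_{C^{2,\alpha}}$ whenever $\|v\|_{C^{2,\alpha}}<\delta$, which is the Frechet condition. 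The linearity and boundedness of $L_u:C^{2,\alpha}\to C^\alpha$ is clear from its explicit formula and the $C^1$ regularity of $a_{ij}$, $\Omega$.
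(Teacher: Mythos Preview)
Your proposal is correct and follows the same three-term decomposition as the paper: the error $\cN(u+v)-\cN(u)-L_u v$ is split into the cross term $[a_{ij}(p,u+v,\nabla u+\nabla v)-a_{ij}(p,u,\nabla u)]\,v_{ij}$, the second-order Taylor remainder of $a_{ij}$ multiplied by $u_{ij}$, and the second-order Taylor remainder of $\Omega$, each then bounded quadratically in $C^\alpha$ to yield $\|\cN(u+v)-\cN(u)-L_u v\|_{C^\alpha}\leq C\,\|v\|_{C^{1,\alpha}}\,\|v\|_{C^{2,\alpha}}$. The only difference is in the mechanism for the $C^\alpha$ bound on the Taylor remainders: the paper isolates this as the dedicated Lemma~\ref{l:holderfact} (any $C^3$ function vanishing to first order at $(p,0,0)$ satisfies $\|f(p,v,\nabla v)\|_{C^\alpha}\leq C\,\|v\|_{C^1}\,\|v\|_{C^{1,\alpha}}$) and proves it directly by splitting the H\"older quotient, whereas you reach the same estimate via the integral form of the Taylor remainder together with the Banach-algebra property of $C^\alpha$---a small point: since $a_{ij},\Omega\in C^3$, the second-derivative coefficients in your integral remainder are $C^1$ rather than $C^2$, but $C^1$ is already enough for your composition argument.
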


We will use the following elementary fact in the proof:

\begin{Lem}	\label{l:holderfact}
If $f(p,s,y)$ is $C^3$ and has $f(p,0,0) = (\partial_s f)(p,0,0) = (\partial_{y_{\alpha}} f)(p,0,0)=0$, then there exists $C$ so that
for any $v \in C^{1,\alpha}$ we have
\begin{equation}
	\|  f(p, v(p) , \nabla v (p) \|_{C^{\alpha}} \leq C \, \| v \|_{C^1} \, \| v \|_{C^{1,\alpha}} \, .
\end{equation}
\end{Lem}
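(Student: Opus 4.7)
The plan is to Taylor-expand $f$ in the $(s,y)$ variables to second order, and then bookkeep the resulting H\"older norms. Since $f(p,0,0) = 0$, $\partial_s f(p,0,0) = 0$, and $\partial_{y_\alpha} f(p,0,0) = 0$, and $f$ is $C^3$, Taylor's theorem with integral remainder produces $C^1$ functions $A$, $B_\alpha$, $C_{\alpha\beta}$ of $(p,s,y)$ so that
\begin{equation}
f(p,s,y) = s^2\, A(p,s,y) + s\, y_\alpha\, B_\alpha(p,s,y) + y_\alpha y_\beta\, C_{\alpha\beta}(p,s,y).
\end{equation}
Substituting $s = v(p)$, $y = \nabla v(p)$ writes $f(p, v(p), \nabla v(p))$ as a sum of three terms of the form (quadratic in $v, \nabla v$) times a pointwise composition of a $C^1$ function with $(v, \nabla v)$.

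For each term I would use the H\"older product rule $\|gh\|_{C^\alpha} \leq \|g\|_{C^\alpha}\|h\|_\infty + \|g\|_\infty [h]_{C^\alpha}$, choosing one factor to contribute a $\|v\|_{C^1}$ and the other to contribute a $\|v\|_{C^{1,\alpha}}$. For the composition factor $\tilde A(p) := A(p, v(p), \nabla v(p))$, and analogously for $\tilde B_\alpha$ and $\tilde C_{\alpha\beta}$, the fact that $A$ is $C^1$ and $v \in C^{1,\alpha}$ yields
\begin{equation}
\|\tilde A\|_{C^\alpha} \leq C_1 \left( 1 + \|v\|_{C^{1,\alpha}} \right),
\end{equation}
where $C_1$ depends only on the $C^1$ norm of $A$ on the compact set swept out by $(v, \nabla v)$; this tacitly requires an a priori bound on $\|v\|_{C^1}$, upon which the final constant $C$ will depend. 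For the quadratic pieces, one uses that on the compact manifold $\Sigma$ one has $[v]_{C^\alpha} \leq C_2 \|v\|_{C^1}$ (from the mean value inequality combined with the diameter bound), and hence
\begin{equation}
\|v_\alpha v_\beta\|_{C^\alpha} \leq \|v_\alpha\|_\infty [v_\beta]_{C^\alpha} + [v_\alpha]_{C^\alpha} \|v_\beta\|_\infty \leq C_3\, \|v\|_{C^1}\, \|v\|_{C^{1,\alpha}},
\end{equation}
and similarly for $v^2$ and $v\,v_\alpha$.

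Multiplying the quadratic estimate by the composition estimate and applying the product rule one more time, the composition contributes a factor bounded by $1 + \|v\|_{C^{1,\alpha}}$ while the quadratic factor contributes $\|v\|_{C^1}\|v\|_{C^{1,\alpha}}$; absorbing the leftover $\|v\|_{C^1}^2$ piece into $\|v\|_{C^1}\|v\|_{C^{1,\alpha}}$ via the trivial inequality $\|v\|_{C^1} \leq \|v\|_{C^{1,\alpha}}$ yields the claimed bound. The main technical point is precisely this bookkeeping: one must split each product so that \emph{exactly one} factor is measured in $C^1$ and the other in $C^{1,\alpha}$, rather than estimating everything in the stronger norm. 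Writing the Taylor remainder so that the full quadratic monomials $s^2$, $s y_\alpha$, $y_\alpha y_\beta$ are extracted is exactly what makes this splitting available, and is the reason the hypotheses on $f$ through first order are used together rather than one at a time.
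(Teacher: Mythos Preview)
Your argument is correct and takes a genuinely different route from the paper's.

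The paper does not Taylor-expand $f$. Instead it estimates the H\"older difference quotient of $g(p) := f(p,v(p),\nabla v(p))$ directly by inserting the intermediate point $f(q,v(p),\nabla v(p))$. The $(s,y)$-variation piece is handled exactly as you would: the first-derivative bound $|\partial_s f| + |\partial_{y_\alpha} f| \leq C\Lambda$ on $|s|+|y|\leq\Lambda$ (from the vanishing hypothesis) gives $C\|v\|_{C^1}\|v\|_{C^{1,\alpha}}$. The new idea is in the $p$-variation piece: since $f(p,0,0)\equiv 0$, $(\partial_s f)(p,0,0)\equiv 0$, $(\partial_{y_\alpha} f)(p,0,0)\equiv 0$ identically in $p$, one may differentiate in $p$ to see that $\partial_{p_i}f$, $\partial_s\partial_{p_i}f$, $\partial_{y_\alpha}\partial_{p_i}f$ all vanish at $(p,0,0)$ as well, yielding $|\partial_{p_i}f(p,s,y)|\leq C(|s|+|y|)^2\leq C\|v\|_{C^1}^2$. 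The paper then interpolates
\[
\frac{|f(p,\cdot)-f(q,\cdot)|}{\dist^{\alpha}}\;\leq\;2\,\|f\|_{C^0}^{1-\alpha}\,\|\partial_p f\|_{C^0}^{\alpha}\;\leq\;C\,\|v\|_{C^1}^2 ,
\]
which is absorbed by $\|v\|_{C^1}\|v\|_{C^{1,\alpha}}$ via $\|v\|_{C^1}\leq\|v\|_{C^{1,\alpha}}$.

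The practical difference is that the paper's route yields a purely quadratic bound, whereas your Taylor--product-rule approach produces a cubic term $\|v\|_{C^1}^2\|v\|_{C^{1,\alpha}}$ coming from $\|v_\alpha v_\beta\|_\infty\,[\tilde C_{\alpha\beta}]_\alpha$, which you absorb using the a~priori bound on $\|v\|_{C^1}$. That is harmless here (both proofs need such a bound to control the $C^2$/$C^3$ norms of $f$ on the relevant compact set), but it is worth knowing that the paper's trick of differentiating the vanishing identities in $p$ avoids it altogether. Your approach, on the other hand, is more modular: once the quadratic monomials $s^2$, $sy_\alpha$, $y_\alpha y_\beta$ are extracted, the rest is just the H\"older product rule, with no need to revisit the structure of $f$.
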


\begin{proof}
Since $f$ and its first derivative vanish at $(p,0,0)$, we have that
\begin{equation}	\label{e:holdc1b}
	\sup \{ \left| \partial_{p_i} f \right| + \left| \partial_s f \right| + \left| \partial_{y_{\alpha}} f \right|
\, \big|   \, |s| + |y| \leq   \Lambda \} \leq C \, \Lambda \, , 
\end{equation}
where $C$ depends on the bound for $\partial^2 f$ in this neighborhood.   
Lemma \ref{l:calculus}  gives 
\begin{align}	\label{e:holderfactC0}
	\left| f (p,v(p), \nabla v(p))   \right| &=
	\left| f (p,v(p), \nabla v(p)) - f (p,0, 0) \right| \leq
	C\, \| v \|_{C^1} \, \left( \left|   v(p) \right|  +   \left|  \nabla v (p) \right| \right) \, .
\end{align}
This gives the desired $C^0$ bound on $f$.  
To get the H\"older estimate, we will write
\begin{align}	\label{e:getHo}
	 &\frac{\left| f (p,v(p), \nabla v(p)) - f (q,v(q), \nabla v(q)) \right| }{ \dist^{\alpha}_{\Sigma} (p,q)} \leq 
	 \frac{ \left| f (p,v(p), \nabla v(p)) - f (q,v(p), \nabla v(p))  \right| }{ \dist^{\alpha}_{\Sigma} (p,q)} \notag \\
	 &\qquad
	 + \frac{ \left| f (q,v(p), \nabla v(p)) - f (q,v(q), \nabla v(q)) \right|	 }{ \dist^{\alpha}_{\Sigma} (p,q)}  \, .
\end{align}
 To estimate the last term, use the derivative
estimate \eqr{e:holdc1b} for $f$ to get
\begin{align}
	   \frac{ \left| f (q,v(p), \nabla v(p)) - f (q,v(q), \nabla v(q)) \right|	 }{ \dist^{\alpha}_{\Sigma} (p,q)}
	 &\leq 	\frac{ C \, \| v \|_{C^1} \, \left(   |v(p) - v(q)| +
	|\nabla v(p) - \nabla v(q)| \right) }{ \dist^{\alpha}_{\Sigma} (p,q)} \notag \\
	&  \leq  C \, \| v \|_{C^1} \, \| v \|_{C^{1,\alpha}}  
	 	\, .
\end{align}
 To bound the first term on the right in \eqr{e:getHo}, we 
need one more observation.  Namely, since
$f(p,0,0) = (\partial_s f)(p,0,0) = (\partial_{y_{\alpha}} f)(p,0,0) \equiv 0$, we can differentiate in $p$ 
 to see that 
 \begin{equation}
 	(\partial_s \partial_{p_i} f)(p,0,0) = (\partial_{y_{\alpha}}\partial_{p_i} f)(p,0,0) \equiv 0 \, .
 \end{equation}
 In particular, we get that
 \begin{equation}
	\sup \{ \left| \partial_s \partial_{p_i}  f \right|  + \left| \partial_{y_{\alpha}} \partial_{p_i}  f \right|
\, \big|   \, |s| + |y| \leq   \| v \|_{C^1} \} \leq C \, \| v \|_{C^1} \, , 
\end{equation}
where the constant depends on the $C^3$ norm of $f$.  Therefore, 
 Lemma \ref{l:calculus}  can be applied  to get for $|s| + |y| \leq \| v \|_{C^1}$ that 
$
 	\left| (\partial_{p_i} f )(p,s, y)   \right| \leq C \, \| v \|^2_{C^1} $.
	 Using this  and the earlier $C^0$ estimate \eqr{e:holderfactC0}, we 
   bound the remaining term in  \eqr{e:getHo}  
\begin{align}
	   \frac{ \left| f (p,v(p), \nabla v(p)) - f (q,v(p), \nabla v(p))  \right| }{ \dist^{\alpha}_{\Sigma} (p,q)}
	   &\leq 	 2 \, \| f \|^{1-\alpha}_{C^0} \left(
	    \frac{ \left| f (p,v(p), \nabla v(p)) - f (q,v(p), \nabla v(p))  \right|}{ \dist_{\Sigma} (p,q)}
	    \right)^{\alpha} \notag \\
	    &\leq 	 C \, \| v \|_{C^1}^{2 - 2 \alpha} \, \left(  \| v \|^2_{C^1}  \right)^{\alpha} 
	  =  C \, \| v \|^2_{C^1}  
	 	\, .
\end{align} 
\end{proof}

\begin{proof}[Proof of Lemma \ref{l:frechet}]
To simplify notation, each function will be evaluated at $p$ or $(p,u, \nabla u)$ unless otherwise specified.
Differentiating $\cN$ at $u$ in the direction of $v$ gives
\begin{align}
	L_ u \, v &\equiv \frac{d}{dt} \big|_{t=0} \, \left\{ a_{ij} (p,u+t v ,\nabla u + t \nabla v) \, (u_{ij} +t \, v_{ij})
	 + \Omega (p, u+ t \, v  , \nabla u + t \, \nabla v)
	\right\} \notag \\
	&= a_{ij}  \, v_{ij} + u_{ij}  \left(
	(\partial_s a_{ij}) \, v + (\partial_{y_{\alpha}} a_{ij}) \, v_{\alpha} \right) 
	  + (\partial_s \Omega)   \, v + (\partial_{y_{\alpha}} \Omega)  \, v_{\alpha} \, .
\end{align}
To prove the lemma, we will  show that  the error $ \cN (u+v) - \cN (u) - L_u \, v$ is quadratic in $v$.
It is convenient to divide the error into three parts
\begin{align}	\label{e:threeerror}
	\cN (u+v) - \cN (u) - L_u \, v =  \Omega_u (p, v , \nabla v) + b_{ij} (p, v , \nabla v) \, u_{ij}  
	+ c_{ij} (p, v , \nabla v) \, v_{ij} \, , 
\end{align}
where  $\Omega_u  $,  $b_{ij} $ 
   and $c_{ij}   $ are given by
\begin{align}
	\Omega_u (p, v , \nabla v) & = \Omega (p, u+v , \nabla u + \nabla v) - \Omega  - 
	 (\partial_s \Omega)  \, v    - (\partial_{y_{\alpha}} \Omega)  \, v_{\alpha}  \, , \\
	 b_{ij} (p,v, \nabla v) & = a_{ij} (p,u+  v ,\nabla u +   \nabla v)   - 
	 a_{ij}  
	  -  
	(\partial_s a_{ij}) \, v   - (\partial_{y_{\alpha}} a_{ij}) \, v_{\alpha}  \, , \\
	c_{ij}  (p, v , \nabla v)  & =  a_{ij} (p,u+  v ,\nabla u +   \nabla v) - a_{ij}  
	\, .
\end{align}
Observe that $\Omega_u (p, v , \nabla v)$ and  $ b_{ij} (p,v, \nabla v) $ satisfy the hypotheses of Lemma \ref{l:holderfact}, so we have
\begin{align}	\label{e:mfft1}
	\| \Omega_u (p, v , \nabla v) \|_{C^{\alpha}} +	\| b_{ij} (p,v, \nabla v)  \|_{C^{\alpha}} &\leq C \, \| v \|_{C^1} \, \| v \|_{C^{1,\alpha}} \, .
\end{align}
Therefore, using the ``$C^{\alpha}$ Leibniz rule'' gives
\begin{align}	\label{e:mfft2}
	\| b_{ij} (p,v, \nabla v) \, u_{ij}  \|_{C^{\alpha}} &\leq \| b_{ij} (p,v, \nabla v)  \|_{C^{\alpha}} \, \| u_{ij} \|_{C^0} + 
	\| b_{ij} (p,v, \nabla v)  \|_{C^{0}} \, \| u_{ij} \|_{C^{\alpha}} \notag \\
	&\leq 
	C  \, \| u \|_{C^{2,\alpha}} \, \| v \|_{C^1} \, \| v \|_{C^{1,\alpha}}  \, .
\end{align}
To bound the third term in \eqr{e:threeerror}, observe that $c_{ij} (p,0,0) = 0$ so Lemma \ref{l:calculus} gives
\begin{equation}	\label{e:cij0}
	\left|  c_{ij} (p,s, y) \right| \leq C \, \left( |s| + |y| \right) \, .
\end{equation}
Moveover, differentiating $c_{ij} (p,0,0) \equiv 0$ implies that $(\partial_{p_k} c_{ij} )(p,0,0) = 0$ so 
Lemma \ref{l:calculus} gives
\begin{equation}	
	\left|  (\partial_{p_k} c_{ij} ) (p,s, y) \right| \leq C \, \left( |s| + |y| \right) \, .
\end{equation}
We can use this, \eqr{e:cij0} and the fact that $c_{ij}$ is $C^1$ in $s$ and $y$ to get
\begin{align}
	 &\frac{\left| c_{ij} (p,v(p), \nabla v(p)) - c_{ij} (q,v(q), \nabla v(q)) \right| }{ \dist^{\alpha}_{\Sigma} (p,q)} \leq 
	 \frac{ \left| c_{ij} (p,v(p), \nabla v(p)) - c_{ij} (q,v(p), \nabla v(p))  \right| }{ \dist^{\alpha}_{\Sigma} (p,q)} \notag \\
	 &\qquad
	 + \frac{ \left| c_{ij} (q,v(p), \nabla v(p)) - c_{ij}(q,v(q), \nabla v(q)) \right|	 }{ \dist^{\alpha}_{\Sigma} (p,q)}  \leq 2 \, \|c_{ij}\|^{1-\alpha}_{C^0} \,  
	 \| \partial_{p_k} c_{ij} \|^{\alpha}_{C^0} + C \, \| v \|_{C^{1,\alpha}}
	 \leq C \, \| v \|_{C^{1,\alpha}}   \notag \, .
\end{align}
It follows that 
$
	\| c_{ij} (p,v(p), \nabla v(p)) \|_{C^{\alpha}} \leq C \, \| v \|_{C^{1,\alpha}} $ and, thus,   the ``$C^{\alpha}$ Leibniz rule'' gives
\begin{align}	\label{e:mfft3}
	\| c_{ij} (p,v, \nabla v) \, v_{ij}  \|_{C^{\alpha}} &\leq  
	C  \, \| v \|_{C^{2,\alpha}} \,    \| v \|_{C^{1,\alpha}}  \, .
\end{align}
Finally, combining \eqr{e:mfft1}, \eqr{e:mfft2} and \eqr{e:mfft3} gives that the error is quadratic in $v$
\begin{equation}
	\| \cN (u+v) - \cN (u) - L_u \, v \|_{C^{\alpha}} \leq C\,  \| v \|_{C^{1,\alpha}}  \, \| v \|_{C^{2,\alpha}}  \, .
\end{equation}
\end{proof}

\subsection{Proof of Proposition \ref{p:Fsmall}}
 \vskip4mm
  Suppose now that $\tilde M_t$ flows by the rescaled MCF for $t_1 \leq t \leq t_2$ and each $\tilde M_t$ is in a neighborhood of a fixed closed shrinker $\Sigma$.  We have that $F(t)$ is non-increasing and applying Proposition \ref{t:loja} with $\Gamma = \tilde M_t$ gives
\begin{align}	\label{e:loja2a}
  	\left| F(t) - F(\Sigma) \right|^{2 -  \beta } \leq 
	\int_{\tilde M_t} \left| \frac{ \langle x , \nn \rangle}{2} - H \right|^2 \,  \e^{ - \frac{|x|^2}{4} } = \left| F' (t) \right| \, ,
  \end{align}
  where $\beta \in (0,1)$.
  We will need the following   simple lemma:

    \begin{Lem}	\label{l:calclem1}
Let $G:[0, T] \to \RR$ be non-negative with  $\left| G(t)  \right|^{2 -  \beta  } \leq  
	 | G'(t)|$ for some  $\beta \in (0,1)$.  If $G'\leq 0$, then 
  	$\left| G (t) \right|  \leq   \left( G^{\beta -1} (0) + (1-\beta) t \right)^{- \frac{1}{1-\beta} } \, .	$
On the other hand, if $G' \geq 0$, then
  	$ \left| G (t) \right|  \leq   \left( G^{\beta -1} (T) + (1-\beta) (T-t) \right)^{- \frac{1}{1-\beta} } \, .	$
  \end{Lem}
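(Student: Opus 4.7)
The plan is a standard separation of variables / ODE comparison argument, so the main work is bookkeeping of signs (noting $\beta-1<0$) and handling the degenerate case $G=0$.

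First I would treat the set where $G>0$. On that set, the hypothesis $G^{2-\beta}\leq |G'|$ may be rewritten as $G^{\beta-2}\,|G'|\geq 1$, and computing $\tfrac{d}{dt}G^{\beta-1}=(\beta-1)\,G^{\beta-2}\,G'$ converts this into a one-sided bound for the derivative of $G^{\beta-1}$. In Case 1, $G'\leq 0$ forces $\tfrac{d}{dt}G^{\beta-1}=(1-\beta)\,G^{\beta-2}\,(-G')\geq 1-\beta$; integrating from $0$ to $t$ gives $G^{\beta-1}(t)\geq G^{\beta-1}(0)+(1-\beta)\,t$. In Case 2, $G'\geq 0$ gives the opposite sign, $\tfrac{d}{dt}G^{\beta-1}\leq -(1-\beta)$; integrating from $t$ to $T$ gives $G^{\beta-1}(t)\geq G^{\beta-1}(T)+(1-\beta)\,(T-t)$.

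Next I would raise both sides to the negative power $\tfrac{1}{\beta-1}=-\tfrac{1}{1-\beta}$, which reverses the inequality because the exponent is negative. This yields exactly the claimed pointwise upper bounds
\[
G(t)\leq \bigl(G^{\beta-1}(0)+(1-\beta)\,t\bigr)^{-\frac{1}{1-\beta}} \quad\text{and}\quad G(t)\leq \bigl(G^{\beta-1}(T)+(1-\beta)\,(T-t)\bigr)^{-\frac{1}{1-\beta}}
\]
in the two cases respectively.

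Finally I would address the boundary behavior when $G$ vanishes. In Case 1, if $G(0)=0$ then monotonicity plus non-negativity forces $G\equiv 0$, which agrees with the bound upon interpreting $G^{\beta-1}(0)=+\infty$; if $G(t_0)=0$ for some later $t_0$ then $G$ vanishes identically from $t_0$ onward and the estimate derived on $[0,t_0)$ passes to the limit. Case 2 is symmetric, with the role of $t=0$ replaced by $t=T$. The only real care required is the sign flip when raising to the negative power $\tfrac{1}{\beta-1}$; everything else is direct integration of a separable inequality, so there is no serious obstacle.
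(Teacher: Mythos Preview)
Your proposal is correct and follows essentially the same approach as the paper: compute $(G^{\beta-1})' = (\beta-1)G^{\beta-2}G'$, use the hypothesis to bound it from below by $1-\beta$ (or above by $-(1-\beta)$), and integrate over the appropriate interval. You are in fact slightly more careful than the paper in explicitly handling the case $G=0$.
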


\begin{proof}
Suppose first that $G'\leq 0$, so 
we get the differential inequality
\begin{equation}
	\left( G^{ \beta - 1} \right)' = (\beta - 1) \, G^{\beta - 2} \, G' \geq  1-\beta  > 0 \, .
\end{equation}
Integrating this from $0$ to $t$ gives the first claim.   The second claim follows from integrating the opposite differential inequality from $t$ to $T$.
\end{proof}

  Suppose that on $[0,s]$ we have that $F(t)\geq F(\Sigma)$ and on $[s,T]$ we have that $F(\Sigma)\geq F(t)$.  Either interval is allowed to be empty.   Set $G(t)=F(t)-F(s)$ on $[0,s]$ and set $G(t)=F(s)-F(t)$ on $[s,T]$.  In each case,  $G\geq 0$ and, by \eqr{e:loja2a}, $\left| G(t)  \right|^{2 -  \beta  } \leq  
	 | G'(t)|$.  On $[0,s]$, $G'\leq 0$, while $G' \geq 0$  on $[s,T]$.  
 On   $[0,s]$, Lemma \ref{l:calclem1} gives
 \begin{align}	\label{e:fromcalci1}
 	[( F(0)-F(s))^{\beta-1}+(1-\beta)\,t]^{-\frac{1}{1-\beta}} &\geq  F(t) - F(s) = 
 	  - \int_t^{s} F'(r) \,dr\\
	  &=   \int_t^{s} \int_{M_r} \left| \frac{ \langle x , \nn \rangle}{2} - H \right|^2 \, \e^{ - \frac{|x|^2}{4} }   \, dr  \, ,\notag
\end{align}
whereas on the second interval $[s,T]$  we get
\begin{align}	\label{e:fromcalci2}
 	[( F(s)-F(T))^{\beta-1}+(1-\beta)\,(T-t)]^{-\frac{1}{1-\beta}} &\geq     \int_{s}^t \int_{M_r} \left| \frac{ \langle x , \nn \rangle}{2} - H \right|^2 \, \e^{ - \frac{|x|^2}{4} }   \, dr  \, . 
\end{align}

We   use these bounds to prove uniform integral bounds,   independent of the time interval.  
We will need the following simple estimate for geometric series:

\begin{Lem}	\label{l:geometric}
If  $\beta \in (0,1)$,  $  \gamma \in (1 ,  (1-\beta)^{-1})$ and $c_1 > 0$, then 
\begin{align}
\sum_{j=1}^{\infty}  2^{ \gamma j}  \left( c_1  +  2^{j+1} \right)^{- \frac{1}{1-\beta} } \leq 
2  \, \left(  \frac{1}{1-\beta} - \gamma \right) \, \left( 2 + c_1 \right)^{ \gamma - \frac{1}{1-\beta}  } 
 \, .
\end{align}
\end{Lem}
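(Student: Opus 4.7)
The plan is to exploit the elementary geometric structure of the sum by splitting it at the crossover scale where the two contributions $c_1$ and $2^{j+1}$ to the denominator balance. Write $\alpha = 1/(1-\beta)$, so the target is $S = \sum_{j=1}^{\infty} 2^{\gamma j}(c_1+2^{j+1})^{-\alpha}$ under the hypothesis $1 < \gamma < \alpha$. Choose an integer $N \geq 1$ with $c_1+2 \leq 2^N \leq 2(c_1+2)$; this is the natural cutoff since $2^{j+1}$ dominates $c_1$ exactly for $j$ larger than $N$.

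On the low range $1 \leq j \leq N$, the crude bound $c_1+2^{j+1} \geq c_1+2$ reduces the sum to $(c_1+2)^{-\alpha}$ times the finite geometric series $\sum_{j=1}^{N} 2^{\gamma j} \leq \frac{2^{\gamma(N+1)}}{2^\gamma-1}$; using $2^N \leq 2(c_1+2)$, this contributes at most $C_1(\gamma)\,(c_1+2)^{\gamma-\alpha}$. On the high range $j > N$, the reverse bound $c_1+2^{j+1} \geq 2^{j+1}$ leaves $\sum_{j>N} 2^{(\gamma-\alpha)j-\alpha}$, a convergent geometric series (since $\gamma < \alpha$) whose total is of order $\frac{2^{(\gamma-\alpha)N}}{1-2^{\gamma-\alpha}}$; combined with $2^N \geq c_1+2$ and the negative exponent $\gamma-\alpha$, this yields $\frac{C_2(\gamma)}{1-2^{\gamma-\alpha}}(c_1+2)^{\gamma-\alpha}$. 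Adding the two contributions gives a bound of the form $C(\gamma,\alpha)(2+c_1)^{\gamma-\alpha}$, where the constant exhibits the expected blow-up $\frac{1}{1-2^{\gamma-\alpha}} \approx \frac{1}{(\alpha-\gamma)\ln 2}$ as $\gamma \uparrow \alpha$.

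The main obstacle is matching the precise numerical constant in the stated inequality (which as written appears to contain a typographical error, since the true dependence is of order $\frac{1}{\alpha-\gamma}$, not $\alpha-\gamma$). For a sharper constant one can replace the dyadic splitting by an integral comparison: bound $S$ by $\int_0^{\infty} 2^{\gamma x}(c_1+2^{x+1})^{-\alpha}\,dx$, with a small additive correction at the unique maximum of the integrand since it is not globally monotone, then perform the substitution $u = c_1+2^{x+1}$ to reduce the integral to $\frac{1}{2^\gamma \ln 2}\int_{c_1+2}^{\infty}(u-c_1)^{\gamma-1}u^{-\alpha}\,du$; the elementary estimate $(u-c_1)^{\gamma-1} \leq u^{\gamma-1}$, valid since $\gamma > 1$ and $u \geq c_1$, then reduces the problem to the explicit power integral $\int_{c_1+2}^{\infty} u^{\gamma-1-\alpha}\,du = \frac{(c_1+2)^{\gamma-\alpha}}{\alpha-\gamma}$, yielding the conclusion directly with a clean constant.
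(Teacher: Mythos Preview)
Your proposal is correct, and you rightly flag that the stated constant $2\bigl(\tfrac{1}{1-\beta}-\gamma\bigr)$ is a misprint for $2\bigl(\tfrac{1}{1-\beta}-\gamma\bigr)^{-1}$; the paper's own proof ends with the integral $2\int_2^\infty (c_1+r)^{\gamma-1-\frac{1}{1-\beta}}\,dr$, which evaluates to the latter.

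Your dyadic splitting at the crossover scale is a genuinely different and more elementary argument than the paper's; it gives the right form $C(\gamma,\alpha)(2+c_1)^{\gamma-\alpha}$ with the correct blow-up $\sim (\alpha-\gamma)^{-1}$ but without an explicit constant. Your second approach is much closer in spirit to the paper: both reduce to the integral $\int (c_1+r)^{\gamma-1-\alpha}\,dr$ via the same key inequality $r^{\gamma-1}\le (c_1+r)^{\gamma-1}$. The difference lies in how the sum is turned into an integral. The paper bounds each term individually by
\[
2^{\gamma j}(c_1+2^{j+1})^{-\alpha}\;\le\;\min_{[2^j,2^{j+1}]} r^\gamma(c_1+r)^{-\alpha}\;\le\;2^{-j}\!\int_{2^j}^{2^{j+1}} r^\gamma(c_1+r)^{-\alpha}\,dr\;\le\;2\!\int_{2^j}^{2^{j+1}} r^{\gamma-1}(c_1+r)^{-\alpha}\,dr,
\]
which sidesteps any monotonicity issue and sums telescopically to $2\int_2^\infty$. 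Your version instead compares the sum to $\int_0^\infty 2^{\gamma x}(c_1+2^{x+1})^{-\alpha}\,dx$ and must then handle the unimodality of the integrand with an extra ``peak'' term; this works, but the peak term has to be estimated separately and is less clean. The paper's min-versus-average trick on dyadic blocks is the one technical improvement worth noting: it delivers the sharp constant $2/(\alpha-\gamma)$ in one stroke.
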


\begin{proof}
For each $j \geq 1$, we have that 
\begin{align}
	2^{ \gamma j}  \left( c_1  +  2^{j+1} \right)^{- \frac{1}{1-\beta} }  
	&\leq 
\min_{[2^j,2^{j+1}]} \, \, \left\{  r^{\gamma} (c_1 +r)^{- \frac{1}{1-\beta} } \right\}  \leq 2^{-j}\, \int_{2^j}^{2^{j+1}}  
r^{\gamma} (c_1 +r)^{- \frac{1}{1-\beta} }  \, dr \notag \\
&\leq  2  \, \int_{2^j}^{2^{j+1}}  r^{\gamma - 1} (c_1 +r)^{- \frac{1}{1-\beta} }  \, dr  \leq 2 \, \int_{2^j}^{2^{j+1}}   (c_1 +r)^{\gamma - 1 - \frac{1}{1-\beta} }  \, dr \, , \notag 
\end{align}
where the last inequality used that $r^{\gamma - 1} \leq (c_1 + r)^{\gamma - 1}$ (since $\gamma > 1$).  Summing  
over $j$ gives
\begin{align}
\sum_{j=1}^{\infty}  2^{ \gamma j}  \left( c_1  +  2^{j+1} \right)^{- \frac{1}{1-\beta} } &\leq  
2  \, \int_{2}^{\infty}   (c_1 +r)^{\gamma - 1 - \frac{1}{1-\beta} }  \, dr =
2  \, \left(  \frac{1}{1-\beta} - \gamma \right) \, \left( 2 + c_1 \right)^{\gamma - \frac{1}{1-\beta}  } \, . \notag
\end{align}
\end{proof}

\begin{Lem}	\label{l:wint1}
With the assumptions of Lemma \ref{l:geometric}, 
 there exists $C$ so that
\begin{align}
	\int_1^{s} r^{  \gamma} \,  \int_{M_r} \left| \frac{ \langle x , \nn \rangle}{2} - H \right|^2 \, \e^{ - \frac{|x|^2}{4} }   \, dr &\leq C \,   \left( F(0) - F(s) \right)^{1 - \gamma \, (1-\beta)} \, ,  \label{e:first247} \\
	\int_s^{T-1} (T-r)^{  \gamma} \,  \int_{M_r} \left| \frac{ \langle x , \nn \rangle}{2} - H \right|^2 \, \e^{ - \frac{|x|^2}{4} }   \, dr &\leq C \,   \left( F(s) - F(T) \right)^{1 - \gamma \, (1-\beta)} \, . \label{e:second247}
\end{align}
\end{Lem}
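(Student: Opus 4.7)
The plan is to prove \eqref{e:first247} by a dyadic decomposition in time, combine the Lojasiewicz-type bound \eqref{e:fromcalci1} on each dyadic piece, and then sum via Lemma \ref{l:geometric}. Equation \eqref{e:second247} will follow by the obvious time-reversal symmetry, replacing the interval $[1,s]$ by $[s,T-1]$ and using \eqref{e:fromcalci2} in place of \eqref{e:fromcalci1}.

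First I would split $[1,s]$ into dyadic pieces $I_j=[2^j,2^{j+1}]$ for $j=0,1,\dots,N$, where $N$ is chosen so that $2^{N+1}\le s<2^{N+2}$ (the last piece, possibly shorter, is handled in the same way as $I_N$). On each $I_j$ the weight satisfies $r^{\gamma}\le 2^{\gamma(j+1)}$, and since $|F'(r)|=\int_{M_r}\bigl|\tfrac{\langle x,\nn\rangle}{2}-H\bigr|^2\e^{-|x|^2/4}$, one has
\begin{equation*}
\int_{I_j} r^{\gamma}\int_{M_r}\left|\tfrac{\langle x,\nn\rangle}{2}-H\right|^2 \e^{-|x|^2/4}\,dr\;\le\;2^{\gamma(j+1)}\bigl(F(2^j)-F(2^{j+1})\bigr).
\end{equation*}
Since $F$ is non-increasing and $2^j\le s$, the telescoping estimate gives $F(2^j)-F(2^{j+1})\le F(2^j)-F(s)$, and \eqref{e:fromcalci1} (with $t=2^j$) yields
\begin{equation*}
F(2^j)-F(s)\;\le\;\bigl[(F(0)-F(s))^{\beta-1}+(1-\beta)\,2^j\bigr]^{-\frac{1}{1-\beta}}.
\end{equation*}

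Summing over $j\ge 0$ and setting $c_1=(F(0)-F(s))^{\beta-1}$, the left side of \eqref{e:first247} is bounded (up to a factor absorbing $(1-\beta)$ and $2^{\gamma}$) by
\begin{equation*}
\sum_{j\ge 1}2^{\gamma j}\bigl(c_1+2^{j+1}\bigr)^{-\frac{1}{1-\beta}}.
\end{equation*}
Since $\gamma\in(1,(1-\beta)^{-1})$, Lemma \ref{l:geometric} applies and gives a bound of the form $C\,(2+c_1)^{\gamma-\frac{1}{1-\beta}}$. Because the exponent $\gamma-\tfrac{1}{1-\beta}$ is negative, $(2+c_1)^{\gamma-\frac{1}{1-\beta}}\le c_1^{\gamma-\frac{1}{1-\beta}}$, and a direct computation
\begin{equation*}
c_1^{\gamma-\frac{1}{1-\beta}}=(F(0)-F(s))^{(\beta-1)\bigl(\gamma-\frac{1}{1-\beta}\bigr)}=(F(0)-F(s))^{1-\gamma(1-\beta)}
\end{equation*}
produces the desired exponent in \eqref{e:first247}. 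The initial piece $[1,2]$ (i.e.\ the $j=0$ term, or alternatively the sub-interval $[1,2]$ if one prefers to set it aside) contributes at most $2^\gamma\bigl(F(1)-F(2)\bigr)\le 2^\gamma(F(0)-F(s))$, which in a neighborhood of $\Sigma$ where $|F(0)-F(s)|$ is bounded is itself dominated by $C\,(F(0)-F(s))^{1-\gamma(1-\beta)}$ since the exponent $1-\gamma(1-\beta)$ lies in $(0,1)$.

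For \eqref{e:second247}, the symmetric dyadic decomposition is $[T-2^{j+1},T-2^j]$, the weight $(T-r)^\gamma$ is controlled by $2^{\gamma(j+1)}$ on each piece, and the analogous telescoping estimate together with \eqref{e:fromcalci2} (applied at $t=T-2^j$) yields
\begin{equation*}
F(T-2^{j+1})-F(T-2^j)\le F(s)-F(T-2^j)\le \bigl[(F(s)-F(T))^{\beta-1}+(1-\beta)\,2^j\bigr]^{-\frac{1}{1-\beta}}.
\end{equation*}
Summing by Lemma \ref{l:geometric} in exactly the same way produces the bound with $F(s)-F(T)$ in place of $F(0)-F(s)$. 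The main technical nuisance is bookkeeping of constants and verifying that the integral on the tail piece $[2^{N+1},s]$ has the same structural bound as the dyadic pieces (it does, since $r^\gamma\le (2s)^\gamma\le 2^{\gamma(N+2)}$ there and the telescoping estimate $F(2^{N+1})-F(s)$ is again controlled by \eqref{e:fromcalci1}); once this is in hand, the geometric series lemma does all the heavy lifting.
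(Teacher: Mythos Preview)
Your proof is correct and follows essentially the same approach as the paper: dyadic decomposition of the time interval, bounding the weight $r^{\gamma}$ by $2^{\gamma j}$ on each piece, applying \eqref{e:fromcalci1} to control $F(2^{j})-F(s)$, summing via Lemma~\ref{l:geometric}, and using the negativity of $\gamma-\tfrac{1}{1-\beta}$ to extract the exponent $1-\gamma(1-\beta)$. The only cosmetic differences are that the paper indexes its dyadic blocks as $[2^{j-1},2^{j}]$ (so the sum automatically starts at $j=1$ matching Lemma~\ref{l:geometric}) and explicitly defines $c_1=\tfrac{4}{1-\beta}(F(0)-F(s))^{\beta-1}$ to absorb the $(1-\beta)$ factor, whereas you absorb constants implicitly and treat the initial and tail pieces separately; none of this affects the argument.
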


\begin{proof}
 For \eqr{e:first247}, 
divide the integral into dyadic parts and use the bound \eqr{e:fromcalci1} to get
\begin{align}	\label{e:gmag}
	\int_1^{s} r^{\gamma} \,  \int_{M_r} \left| \frac{ \langle x , \nn \rangle}{2} - H \right|^2 \, \e^{ - \frac{|x|^2}{4} }   \, dr &= \sum_{j=1}^{\infty} \int_{2^{j-1} }^{\max \{ s , 2^j\} } r^{ \gamma} \,  \int_{M_r} \left| \frac{ \langle x , \nn \rangle}{2} - H \right|^2 \, \e^{ - \frac{|x|^2}{4} }   \, dr  \notag \\
	&\leq   \sum_{j=1}^{\infty}  2^{ \gamma j} \int_{2^{j-1}}^{s}  \,  \int_{M_r} \left| \frac{ \langle x , \nn \rangle}{2} - H \right|^2 \, \e^{ - \frac{|x|^2}{4} }   \, dr   \\
	&\leq  \sum_{j=1}^{\infty}  2^{ \gamma j}  \left( ( F(0)-F(s))^{\beta-1} + (1-\beta) \, 2^{j-1} \right)^{- \frac{1}{1-\beta} } 
	\, . \notag
\end{align}
Set $c_1 = \left(  \frac{4}{1-\beta} \right) \, ( F(0)-F(s))^{\beta-1}$ and 
$c_2 = \left(  \frac{1-\beta}{4} \right)^{- \frac{1}{1-\beta} }$ and
 rewrite the bound in  \eqr{e:gmag} as
 \begin{align}
 	\int_1^{s} r^{\gamma} \,  \int_{M_r} \left| \frac{ \langle x , \nn \rangle}{2} - H \right|^2 \, \e^{ - \frac{|x|^2}{4} }   \, dr
	&\leq c_2 \, 
	\sum_{j=1}^{\infty}  2^{ \gamma j}  \left( c_1  +  2^{j+1} \right)^{- \frac{1}{1-\beta} } \, .
\end{align}
Since $ 1 < \gamma <  \frac{1}{1- \beta } $ and $c_1 >  0$, we can apply Lemma 
\ref{l:geometric} to get
\begin{align}
\sum_{j=1}^{\infty}  2^{ \gamma j}  \left( c_1  +  2^{j+1} \right)^{- \frac{1}{1-\beta} } \leq 
2  \, \left(  \frac{1}{1-\beta} - \gamma \right) \, \left( 2 + c_1 \right)^{ \gamma - \frac{1}{1-\beta}  } 
 \, .
\end{align}
 To complete the proof of \eqr{e:first247}, note that $\gamma - \frac{1}{1-\beta}  < 0$ so we have
 \begin{align}
 	 \left( 2 + c_1 \right)^{ \gamma - \frac{1}{1-\beta}  } & \leq 
	   c_1^{ \gamma - \frac{1}{1-\beta}  }   = \left(  \frac{4}{1-\beta} \right)^{ \gamma - \frac{1}{1-\beta}  } \,
	   ( F(0)-F(s))^{(\beta-1)\,\left(  \gamma - \frac{1}{1-\beta} \right)  }  \, .
 \end{align}
 The second bound \eqr{e:second247} follows similarly, except that we replace $r$ by $T-r$ and use the bound
 \eqr{e:fromcalci2} in place of  \eqr{e:fromcalci1}. 
\end{proof}

\begin{Lem}	\label{l:dist}
There exist $C$ and an open neighborhood $\cO_{\Sigma}$ of $\Sigma$ so that if 
  $\tilde M_t \subset \cO$ is a graphical solution of rescaled MCF on $[t_1 , t_2 ]$, then
\begin{align}
	\int_{\Sigma} \left| u (p, t_2) - u (p,t_1) \right| \leq   C\, \int_{t_1}^{t_2} \int_{\tilde M_r} \left| \frac{ \langle x , \nn \rangle}{2} - H \right| \, \e^{ - \frac{|x|^2}{4} }   \, dr\, .
\end{align}
\end{Lem}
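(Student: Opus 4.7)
The plan is to combine the graphical rescaled MCF equation from Lemma \ref{l:normpart1} with a change of variables between $\Sigma$ and $\tilde M_r$, using that all geometric quantities ($w_u$, $\nu_u$, and the Gaussian weight) are uniformly bounded on a sufficiently small neighborhood of $\Sigma$.

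First I would start from Lemma \ref{l:normpart1}: on the graph, $\partial_t u(p,t) = w(p, u, \nabla u)\,\bigl(\tfrac{1}{2}\eta(p, u, \nabla u) - H_u\bigr)$.  Integrating in $t$ and taking absolute values gives pointwise on $\Sigma$
\begin{equation}
  |u(p, t_2) - u(p, t_1)| \;\leq\; \int_{t_1}^{t_2} w_u(p) \,\Bigl|\tfrac{1}{2}\eta_u(p) - H_u(p)\Bigr|\,dt.
\end{equation}
Integrating over $\Sigma$ and applying Fubini reduces the problem to bounding $\int_\Sigma w_u \,|\tfrac{1}{2}\eta_u - H_u|\, dA_\Sigma$ in terms of the weighted integral over $\tilde M_r$.

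Next I would make the change of variables.  By Lemma \ref{l:areau}, the pullback from $\tilde M_r$ to $\Sigma$ satisfies $dA_{\tilde M_r} = \nu_u(p)\,dA_\Sigma$, and the support function $\eta_u(p)$ equals $\langle x,\nn\rangle$ evaluated at the graph point $x = p + u(p)\nn(p)$ on $\tilde M_r$, while $H_u$ is the mean curvature of $\tilde M_r$ there.  Hence
\begin{equation}
  \int_\Sigma w_u \,\Bigl|\tfrac{1}{2}\eta_u - H_u\Bigr|\,dA_\Sigma
  \;=\; \int_{\tilde M_r} \frac{w_u}{\nu_u}\,e^{|x|^2/4}\,\Bigl|\tfrac{\langle x,\nn\rangle}{2} - H\Bigr|\,e^{-|x|^2/4}\,dA_{\tilde M_r}.
\end{equation}

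Finally I would choose the neighborhood $\cO_\Sigma$.  By Lemma \ref{l:areau}, $w(p,0,0) = 1$, $\nu(p,0,0) = 1$, and both are continuous in $(p, s, y)$; since $\Sigma$ is compact, there is a $C^1$-neighborhood of $0$ on which $w_u/\nu_u \leq 2$ uniformly.  Shrinking $\cO_\Sigma$ further so that graphs $\Upsilon(u)$ with $\tilde M_r \subset \cO_\Sigma$ lie in a fixed compact subset of $\RR^{n+1}$, the Gaussian factor $e^{|x|^2/4}$ is also uniformly bounded by a constant.  Combining these bounds and integrating in $t$ yields the desired inequality with a constant $C$ depending only on $\Sigma$ and $\cO_\Sigma$.

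There is no real obstacle here; the argument is essentially bookkeeping, and the only judgement call is to shrink $\cO_\Sigma$ enough that $w_u$, $\nu_u$ are close to $1$ and the graph stays in a bounded region so $e^{|x|^2/4}$ is controlled.
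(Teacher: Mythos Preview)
Your proposal is correct and follows essentially the same approach as the paper: use Lemma \ref{l:normpart1} to bound $|\partial_t u|$ by $w_u\,|\tfrac{1}{2}\eta_u - H_u|$, integrate in $t$ and over $\Sigma$ (Fubini), convert the $\Sigma$-integral to a $\tilde M_r$-integral via the area element $\nu_u$, and use compactness of $\Sigma$ to control $w_u/\nu_u$ and the Gaussian weight. The paper absorbs $w_u$ into a constant slightly earlier and inserts the Gaussian weight at the end via a lower bound on $\e^{-|x|^2/4}$, but the argument is the same.
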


\begin{proof}   
It follows from Lemma \ref{l:normpart1}  that each $\tilde M_t$ is given as the graph of a function $u(p,t)$ over $\Sigma$, where $|u|$ and $|\nabla u|$ are small and $u$ satisfies 
\begin{align}
	 \partial_t u(p,t)  =  w (p, u(p,t) , \nabla u (p,t)) \,  \left( \frac{1}{2} \, \eta (p, u(p,t) , \nabla u(p,t)) - H_u \right)   \equiv \cM \, u \, .
\end{align}
 Lemma \ref{l:areau} gives that $w$ is uniformly bounded, so we get
 \begin{equation}	\label{e:utbound}
 	\left| \partial_t u(p,t) \right|   \leq C   \,  \left| \frac{1}{2} \, \eta (p, u(p,t) , \nabla u(p,t)) - H_u \right| 
	= C\, \left| \frac{ \langle x , \nn \rangle}{2} - H \right| \, ,
 \end{equation}
 where the last expression is evaluated at the point $p + u(p,t) \, \nn_{\Sigma} (p)$ on the hypersurface $\tilde M_t$. 
 
Since $|u|$ and $|\nabla u|$ are small,  Lemma \ref{l:areau} gives that the area elements on $\Sigma$ and $\tilde M_t$ are uniformly equivalent
 (the ratio of these area elements is   $\nu_u$).  Therefore, \eqr{e:utbound} gives 
  \begin{align}	\label{e:utbound2}
 	\int_{\Sigma} \left| \partial_t u(p,t) \right|   &\leq C   \, \int_{\Sigma}  \left| \frac{1}{2} \, \eta (p, u(p,t) , \nabla u(p,t)) - H_u \right| \notag \\
	&\leq C'   \, \int_{\Sigma}  \left| \frac{1}{2} \, \eta (p, u(p,t) , \nabla u(p,t)) - H_u \right| \, \nu_u 
	=  C' \, \int_{\tilde M_t} \left| \frac{ \langle x , \nn \rangle}{2} - H \right| \, .
 \end{align}
Using the fundamental theorem of calculus and    Fubini's theorem gives
   \begin{align}	\label{e:utbound3}
 	\int_{\Sigma} \left| u (p, t_2) - u (p,t_1) \right|  &\leq \int_{\Sigma} \left( \int_{t_1}^{t_2} \left| \partial_t u (p, t ) \right| \, dt \right)
	=  \int_{t_1}^{t_2}  \left( 
	\int_{\Sigma} \left| \partial_t u(p,t) \right| \right) \, dt \notag \\
	 &\leq  C \,  \int_{t_1}^{t_2}  \int_{\tilde M_t} \left| \frac{ \langle x , \nn \rangle}{2} - H \right| \, dt 
	 \, .
 \end{align}
The lemma follows from this since $\Sigma$ is compact and, thus, each $M_t$ lies in a bounded set where
 $ \e^{ - \frac{|x|^2}{4} } $ has a positive lower bound.

\end{proof}

  \begin{proof}[Proof of Proposition \ref{p:Fsmall}]
 Lemma \ref{l:dist}   gives  that
 \begin{align}
 	\int_{\Sigma} \left| u (p, t_2) - u (p,t_1) \right| &\leq C\, 
	  \int_{t_1}^{t_2}  \int_{\tilde M_r} \left| \frac{ \langle x , \nn \rangle}{2} - H \right| \, \e^{ - \frac{|x|^2}{4} }   \, dr \, .
\end{align}
As above,  suppose that $F(t) \geq F(\Sigma)$ on $[t_1 , s]$ and $F(t) \leq F (\Sigma)$ on $[s, t_2]$.  We will divide the integral on the right into   integrals over the four subintervals $[t_1 , t_1 + 1]$, $[t_1 +1 , s]$, $[s, t_2 -1]$ and $[t_2 - 1 , 1]$.
The first  is   bounded by Cauchy-Schwarz and     \eqr{e:fromcalci1}
\begin{align}
	 \left( \int_{t_1}^{t_1 + 1}  \int_{\tilde M_r} \left| \frac{ \langle x , \nn \rangle}{2} - H \right| \, \e^{ - \frac{|x|^2}{4} }   \, dr \right)^2 &\leq
	  \int_{t_1}^{t_1 + 1}  \left( \int_{\tilde M_r} \left| \frac{ \langle x , \nn \rangle}{2} - H \right|  \, \e^{ - \frac{|x|^2}{4} }  \right)^2 \, dr \notag \\
	  &\leq   \int_{t_1}^{t_1 + 1} F(r) \,  \int_{\tilde M_r} \left| \frac{ \langle x , \nn \rangle}{2} - H \right|^2  \, \e^{ - \frac{|x|^2}{4} }    \, dr \\
	  &\leq F(t_1) \, \left( F(t_1) - F(t_1 + 1) \right) \, .  \notag
\end{align}
The last is bounded similarly  
\begin{align}
	 \left( \int_{t_2 - 1}^{t_2}  \int_{\tilde M_r} \left| \frac{ \langle x , \nn \rangle}{2} - H \right| \, \e^{ - \frac{|x|^2}{4} }   \, dr \right)^2  \leq
	 F(t_2 -1) \, \left( F(t_1) - F(t_2) \right) \, .
\end{align}
To bound the second, set   $\gamma = \frac{1}{2} \,  (1 + (1-\beta)^{-1})$,  use Cauchy-Schwarz and Lemma \ref{l:wint1} to get
\begin{align}
	 &\left( \int_{t_1+1}^{s}  \int_{\tilde M_r} \left| \frac{ \langle x , \nn \rangle}{2} - H \right| \, \e^{ - \frac{|x|^2}{4} }   \, dr \right)^2 \notag \\
	 &\qquad \qquad \leq
	 \left(
	 \int_{t_1+1}^{s} (r-t_1)^{-\gamma} \, dr \right) \int_{t_1+1}^{s}  
	  (r-t_1)^{\gamma} \left( \int_{\tilde M_r} \left| \frac{ \langle x , \nn \rangle}{2} - H \right|  \, \e^{ - \frac{|x|^2}{4} }  \right)^2 \, dr \notag \\
	  &\qquad \qquad   \leq
	  \left(
	 \int_{1}^{\infty} r^{-\gamma} \, dr \right) F(t_1  ) \,  \int_{t_1+1}^{s}  
	  (r-t_1)^{\gamma}   \int_{\tilde M_r} \left| \frac{ \langle x , \nn \rangle}{2} - H \right|^2  \, \e^{ - \frac{|x|^2}{4} }    \, dr \\
	  &\qquad \qquad \leq C \, F(t_1) \, \left( F(t_1 ) - F(s) \right)^{1 - \gamma \, (1-\beta)}
	   \, , \notag
\end{align}
where $C$ depends only on   $\beta$.   
Arguing similarly, we bound the third by
\begin{align}
	 \left( \int_{s}^{t_2 -1}  \int_{\tilde M_r} \left| \frac{ \langle x , \nn \rangle}{2} - H \right| \, \e^{ - \frac{|x|^2}{4} }   \, dr \right)^2  
	   \leq C \, F(t_1) \, \left( F(s) - F(t_2 ) \right)^{1 - \gamma \, (1-\beta)}
	   \, .
\end{align}
Finally, since we have an upper bound for $F(t_1)$ and we can assume that $|F(t_1) - F(t_2)| \leq 1$, combining these four bounds gives the proposition with $\beta = \frac{1}{2}\,
\left(1 - \gamma \, (1-\beta)\right) = \frac{\beta}{4}$.
 \end{proof}


 \appendix

 \section{The Lojaciewicz-Simon inequality}   \label{s:s3}
 
  The classical Lojaciewicz inequality, e.g. \cite{CM5}, is about analytic functions on Euclidean space.  It asserts that near a critical point $x$ of an analytic function $f:\RR^n\to \RR$
    \begin{align}
    	\left| f(x) - f(y) \right|^{2 - {\beta} } \leq |\nabla f (y)|^2 \, ,
\end{align}
where $\beta \in( 0,1)$ is a constant.   We will need Schulze's, \cite{Sc}, Lojaciewicz-Simon inequality:

\begin{Pro}	\label{t:loja}
 (Lojaciewicz-Simon inequality for $F$, \cite{Sc}.)  
 If $\Sigma$ is a closed shrinker and $\beta \in (0,1)$, then there exists $\epsilon> 0$, $\beta \in (0,1)$ 
  so that if  
 $\| u \|_{C^{2,\alpha}} \leq \epsilon$, then
 \begin{equation}
 	\left| F(\Upsilon (u)) - F(\Sigma) \right|^{2- \beta} \leq \left| \nabla F (\Upsilon (u)) \right|^2 
	\equiv \int_{\Upsilon (u)} \left| H - \frac{\langle x , \nn \rangle}{2} \right|^2 \, \e^{ - \frac{|x|^2}{4} } \, .
 \end{equation}
\end{Pro}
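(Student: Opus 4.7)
\emph{Approach.} The plan is to follow Simon's Lojasiewicz–Simon strategy via Lyapunov–Schmidt reduction, adapted to the Gaussian functional as in Schulze's work. Let $\rho = e^{-|x|^2/4}$, $E = C^{2,\alpha}(\Sigma)$, and view $\tilde F(u) := F(\Upsilon(u))$ as a functional on a neighborhood of $0$ in $E$. By Lemma \ref{l:areau},
\[
\tilde F(u) = \int_{\Sigma} e^{-|p + u(p)\nn(p)|^2/4}\, \nu(p, u(p), \nabla u(p)) \, d\mu_\Sigma(p),
\]
and the smooth, indeed real analytic, dependence of $\nu$ on $(s,y)$ makes $\tilde F$ a real analytic functional on $\cU \subset E$. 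Its $L^2_\rho$-gradient $\cG(u)$ is the quasilinear elliptic operator whose value at $u = 0$ vanishes (since $\Sigma$ is a shrinker) and whose Frechet derivative at $0$, by Corollary \ref{c:line} and Lemma \ref{l:frechet}, is the stability operator $L : E \to C^{\alpha}(\Sigma)$.

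\emph{Lyapunov--Schmidt reduction.} Since $\Sigma$ is closed, $L$ is self-adjoint on $L^2_\rho$ with compact resolvent, so $K := \ker L \subset C^\infty(\Sigma)$ is finite-dimensional. Let $\Pi, \Pi^\perp$ denote the $L^2_\rho$-orthogonal projections; smoothness of the elements of $K$ makes them bounded on $C^{2,\alpha}$. Standard elliptic theory shows that $\Pi^\perp L$ is an isomorphism of $\Pi^\perp E$ onto $\Pi^\perp C^{\alpha}$, so the analytic implicit function theorem applied to
\[
(k, v) \mapsto \Pi^\perp \cG(k + v), \qquad k \in K \cap B_\delta,\ v \in \Pi^\perp E,
\]
produces a real analytic map $\Phi : K \cap B_\delta \to \Pi^\perp E$ with $\Phi(0) = 0$, $D\Phi(0) = 0$, and $\Pi^\perp \cG(k + \Phi(k)) \equiv 0$. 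Set $f(k) := \tilde F(k + \Phi(k))$; this is analytic on the finite-dimensional set $K \cap B_\delta$ with a critical point at $0$.

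\emph{Finite-dimensional Lojasiewicz and comparison.} The classical Lojasiewicz inequality applied to $f$ yields $\beta \in (0,1)$ and $C$ with
\[
|f(k) - f(0)|^{2-\beta} \leq C\, |\nabla_K f(k)|^2.
\]
A chain rule computation together with $\Pi^\perp \cG(k + \Phi(k)) = 0$ identifies $\nabla_K f(k)$ with $\Pi\, \cG(k + \Phi(k))$ up to bounded factors, so the right-hand side is controlled by $|\nabla F(\Upsilon(k + \Phi(k)))|^2$. For a general $u \in B_\epsilon(0) \subset E$, decompose $u = k + \Phi(k) + w$ with $k = \Pi u$ and $w = \Pi^\perp u - \Phi(k) \in \Pi^\perp E$. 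The spectral gap of $L$ on $\Pi^\perp E$ combined with the Frechet estimate of Lemma \ref{l:frechet} gives $\|w\|_{L^2_\rho} \leq C\, \|\Pi^\perp \cG(u)\|_{L^2_\rho} \leq C'\, |\nabla F(\Upsilon(u))|$ (the quadratic error being absorbed for small $u$), while a second-order Taylor expansion of $\tilde F$ around $k + \Phi(k)$, using that $\Pi^\perp \cG$ vanishes there, gives $|\tilde F(u) - f(k)| \leq C\, \|w\|_{L^2_\rho}^2 \leq C''\, |\nabla F(\Upsilon(u))|^2$. Writing $\tilde F(u) - F(\Sigma) = (f(k) - f(0)) + (\tilde F(u) - f(k))$, raising to the power $2 - \beta$, and using boundedness of $|\nabla F|$ to absorb the higher power, yields the stated inequality (possibly after shrinking $\beta$).

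\emph{Main obstacle.} The delicate point is the analytic Lyapunov--Schmidt step: verifying that $\cG$ depends \emph{analytically} (not merely $C^1$) on $u$ between Holder spaces, checking that $L$ is Fredholm of index zero from $C^{2,\alpha}$ to $C^{\alpha}$, and confirming that the $L^2_\rho$-projections onto $K$ and $K^\perp$ preserve $C^{2,\alpha}$. Once these functional-analytic facts are in place, the construction of $\Phi$ and the reduction to the classical finite-dimensional Lojasiewicz inequality for the analytic function $f$ proceed as sketched above.
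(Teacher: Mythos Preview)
Your approach is essentially the paper's: both carry out Simon's Lyapunov--Schmidt reduction and then invoke the finite-dimensional Lojasiewicz inequality for the reduced analytic function $f$. The paper packages the reduction via the inverse function theorem applied to $\bar\cN = \Pi + \cN$ (obtaining an inverse $\Psi$), while you use the implicit function theorem on $\Pi^\perp \cG$; these produce the same reduced manifold. For the comparison estimate $|F(u) - f(\Pi u)| \leq C\|\cN(u)\|_Q^2$, the paper integrates the first variation along the straight-line path from $u$ to $\Psi(\Pi u)$ and uses that $\Psi$ is Lipschitz from $L^2$ to $W^{2,2}$, whereas you appeal to a second-order Taylor expansion about $k+\Phi(k)$.

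One point to tighten: your claimed bound $|\tilde F(u) - f(k)| \leq C\|w\|_{L^2_\rho}^2$ does not follow from the $L^2$ spectral gap alone. The second variation of $\tilde F$ is a second-order form, so after integrating by parts the Taylor remainder is controlled by $\|w\|_{W^{1,2}}^2$, not $\|w\|_{L^2}^2$. You therefore need the stronger estimate $\|w\|_{W^{2,2}} \leq C\|\cG(u)\|_{L^2}$, which comes from combining the $L^2$ spectral gap on $K^\perp$ with elliptic $W^{2,2}$-regularity for $L$ and absorbing the nonlinear remainder for small $u$. This is exactly what the paper encodes as property (N1) together with the Lipschitz bound for $\Psi$ from $Q$ to $Q_2$. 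Once you state and use this $W^{2,2}$ control on $w$, your Taylor argument and the subsequent comparison of $\nabla_K f(k)$ with $\cG(u)$ go through and yield the inequality.
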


 Proposition \ref{t:loja} will be a consequence of a general Lojaciewicz-Simon inequality, Lemma \ref{t:lojsim} below, that
 relies on work of Leon Simon, \cite{S}.  We 
  organize the argument in a way that is useful for future reference and include   additional useful information not covered elsewhere.

Let $E$ be the  space of $C^{2,\alpha}$ functions on $\Sigma$ and 
 $F$  an analytic functional on $E$  
\begin{equation}
	F(u) = \int_{\Sigma} G(x, u (x) , \nabla u(x)) \, d\mu_x \, ,
\end{equation}
where $G$ is a positive analytic function of $(x,s,y)$ for $x \in \Sigma$, $s \in \RR$, and $y \in T_x \Sigma$.  
Let
$Q$ be the  positive definite  symmetric
bilinear form 
\begin{equation}
	Q (u,v) = \int_{\Sigma} u(x) \, v(x) \, G (x,0,0) \, d\mu_x \, .
\end{equation}
Note that $Q$ is continuous on $E$.   Let $\| u \|_Q = \sqrt{Q (u,u)}$   denote the associated norm.

We have two important operators from $E$ to $C^{0,\alpha}$: The   nonlinear Euler-Lagrange operator $\cN \, (u)$, i.e.,   the negative of the gradient of $F$,  and its linearization $L$ at $0$. These are 
\begin{align}	\label{e:defcN}
	Q (\cN (u) , \phi ) &=  - \frac{d}{ds} \big|_{s=0} \, F( u+ s \, \phi)  {\text{ for every $u$ and $\phi$ in $C^2 (\Sigma)$,}}\\
  	L (v) &=  \frac{d}{ds} \big|_{s=0} \, \cN ( s\, v) \, .
  \end{align}
 In particular, $u$ is a critical point for $F$ if and only if $\cN\, (u)=0$ and the `tangent space' to the set of critical points for $F$ at $u=0$ is contained in the kernel of $L$.

\begin{Lem}	\label{t:lojsim}
There exist $\beta \in (0,1)$ and a neighborhood $\cO \subset E$ of $0$ so that for $u \in \cO$
\begin{equation}	\label{e:Ljo}
	\left| F(u) - F(0) \right|^{2 - \beta }  \leq \| \cN (u) \|_{Q}^2 \, .
\end{equation}
\end{Lem}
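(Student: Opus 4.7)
The plan is to follow the classical Lyapunov--Schmidt / Lojasiewicz--Simon strategy, which reduces \eqref{e:Ljo} to the finite-dimensional Lojasiewicz inequality on the kernel of the linearization of $\cN$ at $0$. First, I would analyze $L$. Because $\cN$ is the negative $Q$-gradient of the analytic functional $F$, the linearization $L: C^{2,\alpha}(\Sigma) \to C^{\alpha}(\Sigma)$ is formally self-adjoint with respect to $Q$; it is moreover elliptic, since the coefficients $a_{ij}$ appearing in Lemma \ref{l:frechet} come from the second variation of $F$ and are positive definite at $u=0$. As $\Sigma$ is closed, Fredholm theory yields a finite-dimensional kernel $K \subset E$ and a $Q$-orthogonal splitting $E = K \oplus K^{\perp}$ on which $L$ restricts to a topological isomorphism $K^{\perp}\cap C^{2,\alpha} \to K^{\perp} \cap C^{\alpha}$ with Schauder estimate $\| w \|_{C^{2,\alpha}} \leq C \, \| L w \|_{C^{\alpha}}$.

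Next, writing $u = k + w$ with $k \in K$ and $w \in K^{\perp}$, I would apply the analytic implicit function theorem on Banach spaces to
\begin{equation}
	(k,w) \mapsto (I - P_K)\, \cN (k+w) \, ,
\end{equation}
whose $w$-derivative at $(0,0)$ is $L|_{K^{\perp}}$ and hence invertible. Frechet differentiability was already established in Lemma \ref{l:frechet}, and the analyticity of $G$ makes the map real-analytic, so one obtains an analytic map $\phi: U \subset K \to K^{\perp}$ with $\phi(0) = 0$ and $(I-P_K)\cN(k + \phi(k)) \equiv 0$ on a neighborhood of $0$. Setting $g(k) = F(k + \phi(k))$ gives a real-analytic function on the finite-dimensional space $K$; a short chain-rule computation, using the defining identity for $\cN$ together with the fact that $d\phi(k) \cdot h \in K^{\perp}$ is annihilated by the image $(I-P_K)\cN(k+\phi(k)) = 0$, shows that $\nabla_{Q}\, g(k) = -\,P_K \cN(k + \phi(k))$. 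The classical Lojasiewicz inequality for real-analytic functions on finite-dimensional Euclidean space then delivers some $\beta \in (0,1)$ with
\begin{equation}
	|g(k) - g(0)|^{2-\beta} \leq C \, \| P_K \cN(k+\phi(k)) \|_{Q}^{2} \, .
\end{equation}

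The last step is to pass from $g$ back to $F$ on all of $\cO$. I would split
\begin{equation}
	F(u) - F(0) = \bigl( F(u) - g(k) \bigr) + \bigl( g(k) - F(0) \bigr) \, ,
\end{equation}
and bound the first bracket via the fundamental theorem of calculus along the segment from $k+\phi(k)$ to $u$, which gives $|F(u) - g(k)| \leq C\, \| \cN \|_{Q} \, \| w - \phi(k) \|_{Q}$; the displacement $\| w - \phi(k) \|$ is then controlled by $\| (I - P_K)\cN(u) \|$ using the invertibility of $L$ on $K^{\perp}$ together with the quadratic error estimate from Lemma \ref{l:frechet}. Combined with the finite-dimensional bound above, and absorbing quadratic error terms by shrinking $\cO$, this yields \eqref{e:Ljo} with a suitable $\beta$.

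The main technical obstacle is the loss-of-derivatives mismatch between norms: $\cN$ maps $C^{2,\alpha}$ to $C^{\alpha}$, while \eqref{e:Ljo} measures $\cN(u)$ in the much weaker $Q$-norm. One must therefore interpolate carefully, using Schauder estimates to pass from $\| (I-P_K)\cN(u) \|_{C^{\alpha}}$ to $\| w - \phi(k) \|_{C^{2,\alpha}}$, while observing that the $Q$-norm suffices in the mean-value estimate for $F(u) - g(k)$ because the weight $G(\cdot,0,0)$ is bounded and bounded away from zero on the closed manifold $\Sigma$. Verifying that analyticity of $G$ genuinely propagates through $\cN$ to make the reduction $\phi$ — and hence $g$ — real-analytic (rather than merely smooth) is the other point that requires attention, since the classical Lojasiewicz inequality applied to $g$ crucially uses analyticity.
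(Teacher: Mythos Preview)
Your approach is essentially the same as the paper's: both carry out a Lyapunov--Schmidt reduction onto the finite-dimensional kernel $\cK$ of $L$, define an analytic reduced functional there, apply the classical Lojasiewicz inequality, and then compare $F(u)$ to the reduced functional. The only cosmetic difference is in how the reduction is parametrized: you solve $(I-P_K)\cN(k+\phi(k))=0$ via the implicit function theorem, whereas the paper inverts $\bar{\cN}=\Pi+\cN$ via the inverse function theorem and sets $f=F\circ\Psi$ on $\cK$. These give the same reduced manifold (if $\bar{\cN}(v)=u\in\cK$ then $(I-\Pi)\cN(v)=0$), so the two reduced functions differ by an analytic reparametrization of $\cK$.

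The one place your outline is not quite tight is the norm-mismatch step. You propose to control $\|w-\phi(k)\|$ by $\|(I-P_K)\cN(u)\|$ via Schauder estimates, which yields $\|w-\phi(k)\|_{C^{2,\alpha}}\le C\|\cN(u)\|_{C^\alpha}$; combined with the mean-value bound this gives only $|F(u)-g(k)|\le C\|\cN(u)\|_Q\,\|\cN(u)\|_{C^\alpha}$, not the $\|\cN(u)\|_Q^2$ you need. The paper resolves this by introducing an auxiliary weighted $W^{2,2}$ norm $Q_2$ and proving (property~(N1)) an $L^2$-based elliptic estimate $\|u\|_{Q_2}\le C(\|u\|_Q+\|\cN u\|_Q)$, from which it deduces that $\Psi$ is bounded and Lipschitz from $Q$ to $Q_2$; this is what makes both comparison lemmas (the analogs of your bounds on $|F(u)-g(k)|$ and on $|\nabla_{\cK}f|$) land in $\|\cN(u)\|_Q$ rather than in a stronger H\"older norm. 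So your diagnosis of the obstacle is correct, but the cure is $W^{2,2}$ elliptic regularity rather than Schauder interpolation.
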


\vskip2mm
To explain the idea, let
$
	\cK  
$
 be the kernel of $L$ and $\Pi $ the $Q$-orthogonal projection to $\cK$.  By elliptic theory, $\cK$ is finite dimensional.  
 One extreme case of   \eqr{e:Ljo} is where we restrict $F$ to a finite dimensional space.  This case  follows from the classical Lojaciewicz inequality.  
 At the other extreme,    $L$ is invertible and \eqr{e:Ljo} follows from Taylor expansion of $F$ and  does not require analyticity.  
 The general case uses the invertible case and Lyapunov-Schmidt   reduction to reduce to the classical  finite dimensional  case.

Define a map $\bar{\cN} : E \to C^{0,\alpha}$ by
$
	\bar{\cN}   = \Pi   + \cN  $.
	We will  show that near $0$ this map is analytic, one to one and onto, so the inverse function theorem  gives an inverse $\Psi$.
	Note  that  $\Psi (\cK)$  contains the critical points of $F$.  
The construction of   $\Psi$ is the Lyapunov-Schmidt   method, producing a  finite dimensional analytic submanifold containing  the  critical points of $F$.   

The next lemma constructs    $\Psi$  and establishes   its basic properties.  It is useful to introduce  a weighted $W^{2,2}$ norm $Q_2$ on $E$  
\begin{equation}
	\| u \|^2_{Q_2}  = \int_{\Sigma} \left( u^2 + |\nabla u|^2 + \left| \Hess_u \right|^2 \right) \, G(x,0,0) \, d\mu_x \, .
\end{equation}
We will need some properties  that will be proven for the $F$-functional in subsection \ref{ss:FLoj}:
  \begin{enumerate}
  \item[(N1)] We have   $\| u \|_{E} \leq C \, \left( \| u \|_{C^0} + \| \cN  \, u  \|_{C^{0,\alpha}} \right)$ and  
  $\| u \|_{Q_2} \leq C \, \left( \| u \|_Q + \| \cN \, u \|_{Q} \right)$ for $u$ in a neighborhood of $0$.
   \item[(N2)] $L$ is the 
   Frechet derivative of $\cN$ at $0$.
      \item[(N3)]  
      If 
  $\| u \|_{E} + \| v \|_{E} \leq C_1$, then 
$
 	\| \cN (u) - \cN (v) \|_{Q} \leq C_2 \,   \| u-v \|_{Q_2}   
$ for $C_2 = C_2 (C_1)$.
 \item[(N4)]   $\cN$ is analytic in a neighborhood of $0$.
 \end{enumerate}

\begin{Lem}	\label{l:linear}
There exists $\delta > 0$ and an inverse mapping $\Psi : B_{\delta}(0) \subset
C^{0,\alpha} \to E$   with $\Psi \circ \bar{\cN} (u) = \bar{\cN} \circ \Psi (u) = u$ and
\begin{enumerate}
\item $\Psi$ is bounded from $Q$ to $Q_2$; in particular, also from $Q$ to $Q$.
\item $\Psi$ is Lipschitz from 
$Q$ to $Q_2$, i.e., 
$\| \Psi (u) - \Psi ( v) \|_{Q_2} \leq C \,   \|  u - v \|_{Q} $.
\item The function $f: \cK \to \RR$ defined by $f(u) = F(\Psi (u))$ is analytic.
\end{enumerate}
\end{Lem}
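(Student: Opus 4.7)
The plan is to obtain $\Psi$ by applying the analytic inverse function theorem to $\bar{\cN} = \Pi + \cN : E \to C^{0,\alpha}$, whose linearization at $0$ is $\Pi + L$. The three conclusions then reduce respectively to a norm estimate inherited from $(N1)$, the same estimate in difference form via $(N3)$, and preservation of analyticity under composition.

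The first step is to show that $\Pi + L : E \to C^{0,\alpha}$ is an isomorphism. Since $L$ is a self-adjoint second-order elliptic operator with respect to $Q$, standard elliptic Fredholm theory gives $\ker L = \cK$ finite-dimensional with a $Q$-orthogonal splitting $E = \cK \oplus (\cK^{\perp} \cap E)$, on which $L$ vanishes on the first factor and restricts to an isomorphism between the two $\cK^{\perp}$-pieces. On the other hand $\Pi$ is the identity on $\cK$ and vanishes on $\cK^{\perp}$, so the sum $\Pi + L$ is an isomorphism. By $(N2)$ this is the Frechet derivative of $\bar{\cN}$ at $0$, and by $(N4)$ the map $\bar{\cN}$ is analytic near $0$; hence the analytic inverse function theorem (e.g.\ Whittlesey) produces $\delta > 0$ and an analytic two-sided inverse $\Psi : B_{\delta}(0) \to E$. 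Conclusion (3) is then immediate: since $F$ is analytic by construction, the composition $f = F \circ \Psi|_{\cK}$ of analytic maps is analytic on $\cK \cap B_{\delta}(0)$.

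For (1), start from $\bar{\cN}(\Psi(v)) = v$, so that $\cN(\Psi(v)) = v - \Pi \Psi(v)$. Using $(N1)$ and boundedness of $\Pi$ in the $Q$-norm,
\[
\|\Psi(v)\|_{Q_2} \leq C\bigl( \|\Psi(v)\|_Q + \|\cN(\Psi(v))\|_Q \bigr) \leq C\bigl( \|\Psi(v)\|_Q + \|v\|_Q \bigr).
\]
It remains to bound $\|\Psi(v)\|_Q$ by $\|v\|_Q$, which I would do by writing $v = (\Pi + L)\Psi(v) + [\cN(\Psi(v)) - L\Psi(v)]$, inverting $\Pi + L$ on the weighted scale, and absorbing the nonlinear remainder using $(N3)$ and $(N2)$ (small once $\delta$ is small). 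For (2), subtracting the identities for $v_1$ and $v_2$ gives
\[
v_1 - v_2 = (\Pi+L)\bigl(\Psi(v_1) - \Psi(v_2)\bigr) + \bigl[\cN(\Psi(v_1)) - \cN(\Psi(v_2)) - L(\Psi(v_1) - \Psi(v_2))\bigr],
\]
and the same absorption argument, combined with $(N1)$ applied to the difference, yields the Lipschitz bound $\|\Psi(v_1) - \Psi(v_2)\|_{Q_2} \leq C \|v_1 - v_2\|_Q$.

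The main obstacle is this last step: the analytic inverse function theorem only delivers continuity of $\Psi$ between the strong Schauder norms $C^{0,\alpha}$ and $C^{2,\alpha}$, whereas the lemma insists on bounds between the weighted Hilbert-type scales $Q$ and $Q_2$. The hypotheses $(N1)$--$(N3)$, to be verified for the $F$-functional in the next subsection, are exactly the ingredients needed to transfer estimates across these two scales and to control the nonlinear error by a small multiple of $\|\Psi(v)\|_{Q_2}$ (or of $\|\Psi(v_1) - \Psi(v_2)\|_{Q_2}$ in the Lipschitz version), so that it can be absorbed into the left-hand side.
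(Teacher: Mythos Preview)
Your approach is correct and matches the paper's: both invoke the analytic inverse function theorem for $\bar{\cN}$ after checking that its Frechet derivative $\Pi + L$ at $0$ is an isomorphism, and both derive (3) from analyticity of the composite. The paper's own proof is far more compressed than yours---after noting that $\Pi + L$ is bounded $Q_2 \to Q$, bounded $C^{2,\alpha} \to C^{0,\alpha}$, $Q$-self-adjoint, and has trivial kernel, it simply cites the analytic inverse function theorem and asserts ``Properties (1), (2) and (3) follow''---so your write-up in fact supplies the details the paper omits, and your closing paragraph correctly identifies the one genuine subtlety (passing from the Schauder to the $Q$--$Q_2$ scale) and the role of (N1)--(N3) in resolving it.
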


\begin{proof} 
 By (N2), 
  $\Pi   + L  $  is
  the Frechet derivative of $\bar{\cN}$ at $0$. It follows from (N1) that $\Pi + L$ is bounded   from 
 $Q_2$ to $Q$, bounded  from $C^{2,\alpha}$ to $C^{0,\alpha}$, and $Q$-self-adjoint.
Since $\Pi  + L  $ has trivial kernel and $\bar{\cN}$ is analytic by (N4),  
the analytic inverse function theorem   gives $\delta > 0$ and  $\Psi$ (cf. section $2.7$ in \cite{N}).  Properties (1), (2) and (3) follow.
\end{proof}

\begin{Lem}	\label{l:F}
There exists $C$ so that for every sufficiently small $u \in E$
\begin{equation}
	 \left| F (u) - f(\Pi (u)) \right| \leq C \, \| \cN (u)  \|^2_{Q} \, .
\end{equation}
\end{Lem}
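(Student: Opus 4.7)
The plan is to compare $F(u)$ with $F(v)$ where $v = \Psi(\Pi(u))$ via the fundamental theorem of calculus along the segment from $v$ to $u$, and bound both the first-order term and the error by $\|\cN(u)\|_Q^2$ using two observations: that $\cN(v)$ lands in the finite dimensional space $\cK$, and that $u-v$ itself is controlled by $\|\cN(u)\|_Q$ through the Lipschitz property of $\Psi$.

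First, I would record the structural identity.  Writing $\pi = \Pi(u)$, the defining equation $\bar{\cN}(v) = \pi$ becomes
\begin{equation}
    \Pi(v) + \cN(v) = \pi.
\end{equation}
Applying $I - \Pi$ and using that $\Pi(v), \pi \in \cK$ gives $(I-\Pi)\cN(v) = 0$, so $\cN(v) \in \cK$.  Rearranging then yields $\cN(v) = \pi - \Pi(v) = \Pi(u-v)$, and hence $\|\cN(v)\|_Q \leq \|u - v\|_Q$.  Next, I would exploit $u = \Psi(\bar{\cN}(u)) = \Psi(\pi + \cN(u))$ and $v = \Psi(\pi)$, so the Lipschitz estimate from part (2) of Lemma \ref{l:linear} gives
\begin{equation}
    \|u - v\|_{Q_2} \leq C \|\cN(u)\|_Q.
\end{equation}
Combining the two bounds,  $\|\cN(v)\|_Q \leq C \|\cN(u)\|_Q$.

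For the Taylor expansion, since $\cN = -\nabla_Q F$, I would write
\begin{equation}
    F(u) - F(v) = -\int_0^1 Q\bigl(\cN(v + t(u-v)), u-v\bigr)\, dt = -Q(\cN(v), u-v) + R,
\end{equation}
where $R = -\int_0^1 Q(\cN(v + t(u-v)) - \cN(v), u-v)\, dt$.  Using the Lipschitz estimate (N3) and $\|u-v\|_Q \leq \|u-v\|_{Q_2}$, the remainder satisfies $|R| \leq C\|u-v\|_{Q_2}^2 \leq C\|\cN(u)\|_Q^2$.  For the leading term, the key algebraic identity is that $\cN(v) \in \cK$ is $Q$-orthogonal to $\cK^\perp$, so
\begin{equation}
    Q(\cN(v), u-v) = Q(\cN(v), \Pi(u-v)) = Q(\cN(v), \cN(v)) = \|\cN(v)\|_Q^2,
\end{equation}
which is bounded by $C \|\cN(u)\|_Q^2$ by the previous step.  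Summing the two contributions yields the desired estimate.

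I expect the main technical point to be the orthogonality computation $Q(\cN(v), u-v) = \|\cN(v)\|_Q^2$, since it relies on $\cN(v)$ being annihilated by $(I-\Pi)$, which is a genuinely special feature of the Lyapunov–Schmidt construction that $\bar{\cN} = \Pi + \cN$ is tailored to produce.  Everything else is routine: Lipschitz control of $\Psi$ from Lemma \ref{l:linear}(2), and Lipschitz control of $\cN$ from (N3), both of which have already been set up.
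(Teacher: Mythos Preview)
Your proof is correct and follows the same skeleton as the paper: interpolate along the segment between $u$ and $v=\Psi(\Pi u)$, use the first variation formula, and control everything via the Lipschitz bound $\|u-v\|_{Q_2}\leq C\|\cN(u)\|_Q$ from Lemma~\ref{l:linear}(2) together with (N3). The one difference is that you split off the leading term $Q(\cN(v),u-v)$ and compute it exactly via the orthogonality identity $\cN(v)=\Pi(u-v)$, whereas the paper simply applies Cauchy--Schwarz to $Q(\cN(u_t),u-v)$ along the whole path and bounds $\|\cN(u_t)\|_Q\leq C\|\cN(u)\|_Q$ uniformly in $t$; your orthogonality computation is a nice structural observation but is not needed for the estimate, so the paper's argument is a bit shorter.
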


\begin{proof}
 Define 
the  family of functions $	u_t = u +t \, \left( \Psi (\Pi \, u) - u \right) = u + t\,  \Psi \cN (u )$.
The definition of $f$,   fundamental theorem of calculus and   ``first variation formula'' give 
\begin{align}
	\left| F (u) - f(\Pi u)  \right| \leq  	   \int_0^1 \left|  \frac{d}{dt} \, F (u_t) \right| \, dt   =   \int_0^1  \left| Q\left( \cN (u_t) , \Psi \cN (u )  \right) \right|
	  \, dt \leq  \|  \Psi  \cN (u)   \|_Q \, \int_0^1 \, \|  \cN (u_t) \|_Q  \, dt  \, . \notag
\end{align}
The first term on the right is bounded by $C \, \| \cN (u) \|_Q$ since $\Psi$ is bounded on $Q$ by (1).    To bound the second, use (N3)  and property (1)  of $\Psi$ give that
to get
\begin{align}
	\| \cN (u_t)  \|_{Q} &\leq C \, \left( \| \cN (u) \|_{Q} + \| \Psi \, \cN (u) \|_{Q_2} \right)
	\leq  C \, \left( \| \cN (u) \|_{Q} + C' \,  \| \cN (u) \|_{Q} \right)    \, . \notag
\end{align}
\end{proof}

\begin{Lem}	\label{l:grad}
There exists $C$ so that for every sufficiently small $u \in E$
\begin{equation}
	|\nabla_{\cK} f | (\Pi (u)) \leq C \, \| \cN (u)  \|_{Q} \, .
\end{equation}
\end{Lem}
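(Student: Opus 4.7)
The plan is to compute $\nabla_\cK f$ at $w = \Pi(u) \in \cK$ by the chain rule and then bound each resulting factor in terms of $\|\cN(u)\|_Q$. Setting $v = \Psi(w)$, for any test direction $\phi \in \cK$ the identity $f(w) = F(\Psi(w))$, the chain rule, and the first-variation formula $Q(\cN(v),\cdot) = -dF|_v$ give
\begin{equation}
Q(\nabla_\cK f(w), \phi) \;=\; \tfrac{d}{ds}\Big|_{s=0} F(\Psi(w + s\phi)) \;=\; -Q\bigl(\cN(v),\, D\Psi|_w \phi\bigr).
\end{equation}
Cauchy--Schwarz then reduces the bound to controlling $\|\cN(v)\|_Q$ and $\|D\Psi|_w \phi\|_Q$ separately.

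The factor $\|D\Psi|_w \phi\|_Q$ is immediate from Lemma \ref{l:linear}(2): Lipschitz continuity of $\Psi$ from $Q$ to $Q_2$ gives $\|D\Psi|_w \phi\|_{Q_2} \leq C\|\phi\|_Q$, and hence $\|D\Psi|_w \phi\|_Q \leq C\|\phi\|_Q$, since the $Q_2$-norm dominates the $Q$-norm.

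The main step is comparing $\cN(v)$ with $\cN(u)$. Since $\bar\cN(u) = \Pi(u) + \cN(u) = w + \cN(u)$, we have $u = \Psi(w + \cN(u))$, while $v = \Psi(w)$. Applying Lemma \ref{l:linear}(2) again,
\begin{equation}
\|u - v\|_{Q_2} \;\leq\; C\, \|\cN(u)\|_Q,
\end{equation}
and property (N3) then yields $\|\cN(u) - \cN(v)\|_Q \leq C'\|u-v\|_{Q_2} \leq C''\|\cN(u)\|_Q$, so $\|\cN(v)\|_Q \leq (1+C'')\|\cN(u)\|_Q$. Combining these bounds and taking the supremum over $\phi \in \cK$ with $\|\phi\|_Q = 1$ completes the proof.

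The main subtlety is bookkeeping the norms so that the estimates chain cleanly: property (2) of $\Psi$ turns $Q$-input into $Q_2$-output, while (N3) turns $Q_2$-input into $Q$-output, and these dovetail in the loop $\cN(u) \mapsto u-v \mapsto \cN(u) - \cN(v)$. A useful structural remark, not strictly needed for the Cauchy--Schwarz step but clarifying what is going on, is that $\cN(v) \in \cK$: indeed $\Pi(v) + \cN(v) = w \in \cK$ forces $\cN(v) = w - \Pi(v) \in \cK$, confirming that the reduced gradient $\nabla_\cK f$ really does detect only the $\cK$-component of $\cN$.
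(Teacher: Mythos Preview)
Your proof is correct and follows essentially the same approach as the paper: compute the directional derivative of $f = F\circ\Psi$ via the first-variation formula, bound the $\Psi$-derivative factor by the Lipschitz property (2), and then compare $\cN(\Psi(\Pi u))$ with $\cN(u)$ using (N3) together with the Lipschitz bound on $\Psi$. Your handling of that last comparison---writing $u=\Psi(w+\cN(u))$ and $v=\Psi(w)$, then invoking (2) directly---is in fact cleaner than the paper's, which asserts the identity $\Psi\circ\Pi(u)=u-\Psi\circ\cN(u)$ (not literally true since $\Psi$ is nonlinear) before arriving at the same estimate.
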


\begin{proof}
If $w , v \in \cK$, then    the ``first variation formula'' for $f=F \circ \Psi$ gives
\begin{equation}
	\frac{d}{ds} \big|_{s=0} \, f( w + s \, v) = \frac{d}{ds} \big|_{s=0} \, F ( \Psi \left( w + s \, v \right) )
	= - Q\left(  \cN ( \Psi (w)) \, , \left\{   
	\frac{d}{ds} \big|_{s=0} \,   \Psi \left( w + s \, v \right) 
	\right\} \right)  \, .  \notag
\end{equation}
It follows from the Lipschitz property of $\Psi$, i.e., property (2), that 
\begin{align}
	\left\| \frac{d}{ds} \big|_{s=0} \,   \Psi \left( w + s \, v \right) \right\|_{Q} \leq C \, \| v \|_{Q}  \, .
\end{align}
Therefore,  we have
$ 
	|\nabla_{\cK} f| (w) \leq C \, \| \cN \circ \Psi (w) \|_{Q}$ when $w \in \cK$ and, thus, for any $u$ 
\begin{align}	\label{e:step1of1}
	\left|\nabla_{\cK} f \right| (\Pi (u)) \leq C \, \| \cN \circ \Psi \circ \Pi (u) \|_{Q} \, .
\end{align}
This is close to what we want, except that $\cN$ is evaluated at $\Psi \circ \Pi (u)$ instead of at $u$.
The definition of $\Psi$ gives   $\Psi \circ \Pi (u) = u - \Psi \circ \cN (u)$, so 
(N3) and property (1)  give 
\begin{align}	 
	  \| \cN \circ \Psi \circ \Pi (u) \|_{Q} &=   \| \cN \left(  u - \Psi  \cN u
	  \right)  \|_{Q}
	   \leq C \, \left(   \| \cN  (u) \|_{Q} + \|  \Psi  \cN (u) \|_{Q_2} \right)  \leq \bar{C} \, 
	    \| \cN  (u) \|_{Q}
	  \, .  \notag
\end{align}
\end{proof}

\begin{proof}[Proof of Lemma \ref{t:lojsim}]
  Lemma \ref{l:grad} and the classical Lojasiewicz inequality for  $f$ on $\cK$ give
\begin{align}	\label{e:step1}
	C \, \| \cN (u)  \|_{Q} \geq \left|\nabla_{\cK} f \right| (\Pi (u)) \geq \left| f (\Pi (u)) - f(0) \right|^{1 - \frac{\beta}{2} } \, .
\end{align}
On the other hand, since $F(0) = f(0)$,  Lemma \ref{l:F} gives
\begin{align}
	\left| f (\Pi (u)) - f(0) \right| &\geq \left| F(u) - F(0) \right| -
	 \left| f (\Pi (u)) - F(u) \right|  \geq	\left| F(u) - F(0) \right| -  C \, \| \cN (u)  \|^2_{Q} \, . \notag
\end{align}
Combining these two inequalities gives the desired estimate.
\end{proof}

\subsection{The required properties for $F$}  \label{ss:FLoj}

If we set
  $F(u) = F(\Upsilon (u))$, then $F$ is a functional on $E$ with the map $G: \Sigma \times \RR \times T\Sigma \to \RR$ given by
\begin{align}	\label{e:defGforF}
	G(p,s,y) = \e^{- \frac{|p + s \, \nn (p)|^2}{4} } \, \nu (p,s, y)  \, ,
\end{align}
where $\nu$ is given by Lemma \ref{l:areau}.  We will show in Corollary \ref{c:cN} that Lemma \ref{t:lojsim} applies and, thus, complete the proof of Proposition \ref{t:loja}.

\begin{Lem}
$G$ is uniformly analytic in $s$ and $y$.  Namely, there is $\beta > 0$ so that if $|z|, |w|, |p|, |q| < \beta$, then $G$ can be expanded in a power series
\begin{align}
	G(x,z+\lambda_1 w , p + \lambda_2 \, q) = 
	\sum_{|\alpha| \geq 0} \, G_{\alpha} (x,z,w,p,q) \, \lambda^{\alpha} \, , 
\end{align}
where for all $|\lambda| < 1$ we have   
$
	  \left| \sum_{|\alpha|=j} \, 
	G_{\alpha} (x,z,w,p,q) \, \lambda^{\alpha} \right|  \leq 1  
$ with $|z|, |w|, |p|, |q| < \beta$.
\end{Lem}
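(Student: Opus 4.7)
The plan is to factor $G(p,s,y)=\e^{-|p+s\,\nn(p)|^{2}/4}\,\nu(p,s,y)$ and establish joint real-analyticity of each factor, uniformly in the base point $p\in\Sigma$, then invoke compactness of $\Sigma$ and Cauchy estimates to extract the desired coefficient bounds.

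First, I would observe that a closed smooth shrinker $\Sigma\subset\RR^{n+1}$ satisfies the quasilinear elliptic equation $H=\langle x,\nn\rangle/2$, whose coefficients are analytic functions of position. By Morrey's analytic elliptic regularity, $\Sigma$ is a real-analytic submanifold of $\RR^{n+1}$; consequently its induced metric $g_{ij}$, unit normal $\nn$, second fundamental form $h_{ij}$ and all their derivatives are real-analytic on $\Sigma$. The Gaussian factor is then immediate: $|p+s\,\nn(p)|^{2}=|p|^{2}+2s\,\langle p,\nn(p)\rangle+s^{2}$ is polynomial in $s$ with analytic coefficients in $p$ and independent of $y$, so $\e^{-|p+s\nn(p)|^{2}/4}$ is entire in $s$ and analytic in $p$.

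Second, I would handle $\nu$ by parametrizing $\Upsilon(u)$ as $F(p)=p+u(p)\,\nn(p)$ in the normal exponential tubular neighborhood. In a local analytic frame on $\Sigma$, a direct computation of $g^{u}_{ij}=\langle\partial_{i}F,\partial_{j}F\rangle$ expresses it as a polynomial in $(s,y)=(u,\nabla u)$ whose coefficients are analytic combinations of $g_{ij}$, $h_{ij}$ and their covariant derivatives. Hence $\det g^{u}_{ij}$ is polynomial in $(s,y)$ with analytic coefficients in $p$, and is positive near $(s,y)=(0,0)$ by Lemma \ref{l:areau}. Its positive square root, divided by $\sqrt{\det g_{ij}(p)}$, is therefore real-analytic in $(p,s,y)$ on a neighborhood of $\Sigma\times\{0\}\times\{0\}$, giving the required analyticity of $\nu$.

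Third, the product of analytic functions is analytic, so $G$ is real-analytic in $(s,y)$ with real-analytic dependence on $p\in\Sigma$. Because $\Sigma$ is compact, the radii of convergence of the Taylor series of $G(x,\cdot,\cdot)$ at $(0,0)$ admit a positive infimum over $x\in\Sigma$, and the Taylor coefficients are uniformly bounded in $x$. Shifting the center of expansion by $(z,p)$ with $|z|,|p|$ small perturbs these radii and bounds only slightly, so there exists $\beta>0$ such that for every $x\in\Sigma$ and $|z|,|w|,|p|,|q|<\beta$, the function $(\lambda_{1},\lambda_{2})\mapsto G(x,z+\lambda_{1}w,p+\lambda_{2}q)$ is analytic on the bidisk $\{|\lambda_{1}|,|\lambda_{2}|\le 1\}$, with a uniform bound on $\|G\|_{\infty}$ on this bidisk. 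Cauchy estimates then give uniform $\ell^{\infty}$ bounds on the coefficients $G_{\alpha}$, and after shrinking $\beta$ if necessary (to absorb the constant into a geometric factor), we obtain the normalization $|\sum_{|\alpha|=j}G_{\alpha}\lambda^{\alpha}|\le 1$ for $|\lambda|<1$.

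The only non-routine step is the analyticity of $\nu$; the key input is real-analyticity of $\Sigma$, which ultimately rests on analytic elliptic regularity applied to the shrinker equation, after which everything reduces to bookkeeping via the explicit formula for $g^{u}_{ij}$ and compactness.
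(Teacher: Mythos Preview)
Your argument is correct and follows the same decomposition as the paper: factor $G$ as the Gaussian times $\nu$, observe each factor is analytic in $(s,y)$, and take the product. The paper simply cites \cite{CM3} for the analyticity of $\nu$, whereas you re-derive it from the explicit polynomial structure of $g^{u}_{ij}$ and then add the Cauchy-estimate/compactness argument for uniformity that the paper leaves implicit.

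One remark: your invocation of Morrey's analytic elliptic regularity to conclude that $\Sigma$ is real-analytic is true but unnecessary. The lemma asserts only analyticity in $s$ and $y$, uniformly in $x$; it does not claim analyticity in the base point. Since $g^{u}_{ij}$ is already polynomial in $(s,y)$ with coefficients that are smooth (indeed merely continuous would do) in $p$, and since $\sqrt{\cdot}$ is analytic on positive reals, you get analyticity of $\nu$ in $(s,y)$ with smooth dependence on $p$ from smoothness of $\Sigma$ alone. Compactness of $\Sigma$ then gives the uniform radius $\beta$ and the uniform coefficient bound without ever needing $\Sigma$ itself to be analytic. So your ``only non-routine step'' is in fact routine once you drop the superfluous regularity upgrade.
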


\begin{proof}
This follows immediately since the product of analytic functions is also analytic, $\nu$ is analytic by construction in  \cite{CM3}, and
$ \e^{- \frac{|p + s \, \nn (p)|^2}{4} }$ is also analytic.
\end{proof}

Let  $\cM$ be  from Lemma \ref{l:normpart1}, so   $\partial_t u = \cM u$ is the graphical rescaled MCF equation.

\begin{Lem}	\label{l:fvar}
The operator $\cN$ is given by
$	\cN (u) = \zeta (p, u , \nabla u)   \, \cM (u) $,
where  $\zeta (p,s,y)$  is a smooth function with
$
	\zeta (p,0,0) = 1$.   
\end{Lem}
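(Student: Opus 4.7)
The plan is to compute $\cN(u)$ directly from its defining formula $Q(\cN(u),\phi)=-\frac{d}{ds}|_{s=0}F(u+s\phi)$ and compare the result to $\cM u = w_u(\tfrac{1}{2}\eta_u-H_u)$ from Lemma \ref{l:normpart1}. The multiplicative discrepancy will turn out to be built entirely out of the three quantities from Lemma \ref{l:areau} and an exponential weight.

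First I would use the geometric first variation formula for $F$ on a hypersurface $\Sigma'$ under a variation vector $V$: $\frac{d}{ds}F(\Sigma'_s)|_{s=0}=\int_{\Sigma'}V^{\perp}(H'-\tfrac{1}{2}\langle x,\nn'\rangle)\,e^{-|x|^2/4}\,dA$, where $V^{\perp}=\langle V,\nn'\rangle$. Applying this to $\Sigma'=\Upsilon(u)$ with $V=\phi\,\nn_{\Sigma}$ (the variation associated to $u\mapsto u+s\phi$), I would observe that $V^{\perp}=\phi\,\langle \nn_{\Sigma},\nn_u\rangle$, and that the definition $w_u(p)=\langle e_{n+1},\nn_u\rangle^{-1}$ with $e_{n+1}=\nn_{\Sigma}$ along $\Sigma$ gives exactly $\langle\nn_{\Sigma},\nn_u\rangle=1/w_u$. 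Hence $V^{\perp}=\phi/w_u$.

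Next I would pull the resulting hypersurface integral back to $\Sigma$ via the relative area element $\nu_u\,d\mu_p$ and the identification $\langle x,\nn_u\rangle=\eta_u$, obtaining
\begin{equation}
-\frac{d}{ds}\big|_{s=0}F(u+s\phi)=\int_{\Sigma}\phi\cdot\frac{\nu_u}{w_u}\Bigl(\frac{\eta_u}{2}-H_u\Bigr)\,e^{-|p+u\,\nn|^2/4}\,d\mu_p.
\end{equation}
On the other hand, $G(p,0,0)=e^{-|p|^2/4}\nu(p,0,0)=e^{-|p|^2/4}$ by Lemma \ref{l:areau}, so $Q(\cN(u),\phi)=\int_{\Sigma}\cN(u)\,\phi\,e^{-|p|^2/4}\,d\mu_p$. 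Equating the two expressions for every test function $\phi$ yields the pointwise identity
\begin{equation}
\cN(u)(p)=\frac{\nu_u}{w_u}\Bigl(\frac{\eta_u}{2}-H_u\Bigr)\,e^{(|p|^2-|p+u\,\nn|^2)/4}.
\end{equation}
Since $\cM u=w_u(\tfrac{1}{2}\eta_u-H_u)$, I can factor out $\cM u$ and set
\begin{equation}
\zeta(p,s,y)=\frac{\nu(p,s,y)}{w(p,s,y)^2}\,e^{(|p|^2-|p+s\,\nn(p)|^2)/4},
\end{equation}
which gives $\cN(u)=\zeta(p,u,\nabla u)\,\cM u$.

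Finally, smoothness of $\zeta$ near $(s,y)=(0,0)$ follows from the smoothness of $\nu$ and $w$ in Lemma \ref{l:areau} (the denominator $w^2$ is nonvanishing since $w(p,0,0)=1$) together with the analyticity of the exponential factor; the normalization $\zeta(p,0,0)=1$ is then immediate from $\nu(p,0,0)=w(p,0,0)=1$ and the vanishing of the exponent at $s=0$. The only delicate step is the bookkeeping of signs and the identification $\langle\nn_{\Sigma},\nn_u\rangle=1/w_u$; everything else is routine once the geometric first variation formula is in hand.
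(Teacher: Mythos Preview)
Your proof is correct and follows essentially the same route as the paper: compute the first variation of $F$ on $\Upsilon(u)$, pull back to $\Sigma$ via $\nu_u$, identify the result with $Q(\cN(u),\phi)$, and read off $\cN(u)=\frac{\nu_u}{w_u^2}\,\e^{(|p|^2-|p+u\nn|^2)/4}\,\cM(u)$, then invoke Lemma~\ref{l:areau} for the normalization. The only cosmetic difference is that you spell out the identification $\langle \nn_\Sigma,\nn_u\rangle=1/w_u$ and the smoothness of $\zeta$ explicitly, whereas the paper leaves these to the reader.
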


\begin{proof}
The first variation formula (see, e.g., lemma $3.1$ in \cite{CM1})  gives
\begin{align}
	\frac{d}{ds} \, \big|_{s=0} \, F( u+ sv) = \int_{\Upsilon (u)} v \, 
	\langle e_{n+1} , \nn_u \rangle \, \left( H_u - \frac{ \langle x , \nn_u \rangle    }{2}  \right) \,  \e^{- \frac{|x|^2}{4} } \, .
\end{align}
We convert this to an integral on $\Sigma$, introducing the relative volume element $\nu_u$, to get
\begin{align}
	\frac{d}{ds} \, \big|_{s=0} \, F( u+ sv) = \int_{\Sigma}  
	\frac{v}{w_u}  \, \left( H_u - \frac{\eta_u}{2}  \right) \,  \e^{- \frac{|p + u  \, \nn  |^2}{4} } \, \nu_u \, ,
\end{align}
where  $w_u$, $H_u$, $\eta_u$ are from Lemma \ref{l:areau}.  The definitions of $\cN$, $Q$ and $G$ give
\begin{align}
	\frac{d}{ds} \, \big|_{s=0} \, F( u+ sv)  = -Q ( \cN (u) , v) = - \int_{\Sigma} 
	\cN (u) \, v \, G(p, 0,0) = - \int_{\Sigma} 
	\cN (u) \, v \, \e^{ - \frac{|p|^2}{4} } \, .
\end{align}
Thus, equating the two expressions gives 
\begin{equation}
	\cN (u) = \frac{\nu_u}{w_u}  \, \left(   \frac{\eta_u}{2} - H_u \right) \,  \e^{ \frac{|p|^2 - |p + u  \, \nn  |^2}{4} } 
	=  \frac{\nu_u}{w^2_u}  \,   \e^{ \frac{|p|^2 - |p + u  \, \nn  |^2}{4} } \, \cM (u) \, .
\end{equation}
The lemma follows from this and Lemma \ref{l:areau}.
\end{proof}

 \begin{Cor}	\label{c:cN}
The linearization of $\cN$ at $0$ is    $L$ and (N1), (N2), (N3), (N4) hold.
 \end{Cor}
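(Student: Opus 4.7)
My plan is to deduce the corollary by combining Lemma \ref{l:fvar} (which exhibits $\cN$ as a quasilinear elliptic operator with the same leading structure as $\cM$) with Lemma \ref{l:frechet} (for Frechet differentiability of quasilinear operators) and Lemma \ref{l:areau} (for the explicit analytic structure of the geometric quantities $w, \nu, \eta, H_u$). Concretely, by Lemma \ref{l:fvar}, $\cN(u) = \zeta(p, u, \nabla u) \, \cM(u)$ where $\zeta$ is smooth and $\zeta(p,0,0) = 1$, and $\cM$ has the form $\cM(u) = a_{ij}(p, u, \nabla u)\, u_{ij} + \Omega(p, u, \nabla u)$ of \eqref{e:cNapp} with $a_{ij}$ uniformly positive definite in a $C^{2,\alpha}$-neighborhood of $0$.

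For (N2), since $\Sigma$ is a shrinker, $\cM(0) = 0$; applying Lemma \ref{l:frechet} to $\cM$ and then the product rule gives that the Frechet derivative of $\cN$ at $0$ in direction $v$ equals $\zeta(p,0,0) \, L_0\, v + \{(\partial_s \zeta)\, v + (\partial_{y_\alpha}\zeta)\, v_\alpha\} \cdot \cM(0) = L_0 v$, where $L_0$ is the linearization of $\cM$ at $0$. By Corollary \ref{c:line}, $L_0 = L$. For (N4), the quantities $\nu, w, \eta$ built by the explicit construction in Lemma \ref{l:areau} and \cite{CM3} are analytic in $(s,y)$, as is $\e^{-|p+s\nn|^2/4}$; the coefficients $a_{ij}, \Omega$ of $\cM$ and the factor $\zeta$ are built from these by finitely many composition, multiplication, and differentiation operations, all of which preserve real-analyticity. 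Hence $\cN: C^{2,\alpha} \to C^{0,\alpha}$ is analytic near $0$.

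For (N1), the operator $\cN$ is a quasilinear elliptic operator with Holder coefficients which remain uniformly elliptic on a $C^{2,\alpha}$-neighborhood of $0$. Standard interior Schauder theory on the closed manifold $\Sigma$ (applied after freezing coefficients and treating the deviation as a Holder right-hand side) gives $\|u\|_{C^{2,\alpha}} \leq C(\|u\|_{C^0} + \|\cN u\|_{C^{0,\alpha}})$. For the second estimate, observe that on the compact $\Sigma$ the weight $G(x,0,0) = \e^{-|x|^2/4}$ is smooth and bounded above and below by positive constants, so $\|\cdot\|_Q$ and $\|\cdot\|_{Q_2}$ are equivalent to the standard $L^2$ and $W^{2,2}$ norms; the bound $\|u\|_{Q_2} \leq C(\|u\|_Q + \|\cN u\|_Q)$ then follows from $L^2$-based elliptic theory applied to the linear operator obtained by freezing the coefficients of $\cN$ at $u$.

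For (N3), write $\cN(u) - \cN(v) = \int_0^1 \frac{d}{dt}\cN(v + t(u-v))\, dt$; the integrand is the linearization of $\cN$ along $u_t$ applied to $u-v$, whose coefficients are bounded in $C^0$ uniformly in $u, v$ with $\|u\|_E + \|v\|_E \leq C_1$ by Lemma \ref{l:calculus}. Integrating and using compactness of $\Sigma$ yields $\|\cN(u)-\cN(v)\|_Q \leq C_2 \|u-v\|_{Q_2}$. The main obstacle is really just the analyticity claim (N4), which requires knowing that the construction of $\nu, w, \eta$ in \cite{CM3} is genuinely real-analytic rather than merely $C^\infty$; everything else reduces to bookkeeping applications of Lemma \ref{l:frechet}, Lemma \ref{l:areau}, and standard Schauder estimates on a compact manifold.
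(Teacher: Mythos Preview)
Your proposal is correct and follows essentially the same route as the paper's proof: both use Lemma~\ref{l:fvar} to write $\cN(u)=\zeta\,\cM(u)$ and then invoke Corollary~\ref{c:line} and Lemma~\ref{l:frechet} for the linearization and (N2), apply Schauder/$W^{2,2}$ estimates with interpolation for (N1), and use the fundamental theorem of calculus for (N3). The only cosmetic differences are that the paper writes out $\cN(u)-\cN(v)$ explicitly and applies Lemma~\ref{l:calculus} termwise rather than using your integral form $\int_0^1 \frac{d}{dt}\cN(v+t(u-v))\,dt$, and the paper makes the interpolation step in (N1) explicit (absorbing the intermediate $\|u\|_{C^{1,\alpha}}$ or $\|u\|_{W^{1,2}}$ term into $\|u\|_{C^0}$ or $\|u\|_{L^2}$), which you should also spell out.
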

 
 \begin{proof}
By Lemma  \ref{l:fvar},
$
	\cN (u) =   \zeta (p, u , \nabla u)   \, \cM (u) 
$.   
Since $\cM (0)= 0$ and $\zeta (p,0,0) = 1$,  the linearizations of $\cN$ and $\cM$ agree at $0$, so the first claim follows by Corollary \ref{c:line}.
To get the first part of (N1), observe that $\cN (u)$ can be written as 
\begin{equation}	\label{e:togetN1}
	\cN (u) = a_{ij} (p,u,\nabla u) \, u_{ij} + \Omega (p, u , \nabla u) \, , 
\end{equation}
 where
$a_{ij}$, $\Omega$ are smooth, 
$a_{ij} (p,0,0) = \delta_{ij}$ and $\Omega (p,0,0) = 0$.  
This last condition gives 
\begin{equation}
	 \| \Omega \|_{C^{\alpha}} \leq C \,  \| u \|_{C^{1, \alpha}} \, .
\end{equation}
Since $u$ is assumed to be small in  $C^{2,\alpha}$, we get that $a_{ij}$ has a  uniform $C^{\alpha}$ bound.  Thus, we can apply linear Schauder estimates to get that
\begin{equation}
	\| u \|_{C^{2,\alpha} } \leq C \, \left( \| u \|_{C^0} + \| \Omega \|_{C^{\alpha}} + \| \cN (u) \|_{C^{\alpha}} \right) 
	\leq C' \left( \| u \|_{C^0} + \| u \|_{C^{1,\alpha}}  + \| \cN (u) \|_{C^{\alpha}} \right)  \, .
\end{equation}
To complete the bound, use interpolation  (page $141$ of \cite{GT})
 to absorb the $\| u \|_{C^{1,\alpha}} $ term.

 For the second part of (N1), we use \eqr{e:togetN1} and   linear  $W^{2,2}$ estimates   to  get
 \begin{align}	\label{e:N1g}
 	\| u \|_{W^{2,2}} \leq C \, \left( \| \cN (u) \|_{L^2} + \| \Omega (p, u , \nabla u) \|_{L^2} 
	\right) \, .
 \end{align}
 Since $\Omega (p,0,0) = 0$, the fundamental theorem of calculus gives
$
 	\left| \Omega (p,u,\nabla u) \right| \leq C\, \left( |u| + |\nabla u| \right) 
$ and, thus, 
 $ \| \Omega (p, u , \nabla u) \|_{L^2}  \leq C \, \| u \|_{W^{1,2}}$. The $W^{2,2}$ estimate in (N1) follows by using this in \eqr{e:N1g} and then using
   interpolation  (page $173$ of \cite{GT})    $\| u \|_{W^{1,2}} \leq C_{\epsilon}\, \| u \|_{L^2}  + \epsilon \, \| u \|_{W^{2,2}}$.

 The property (N2) follows from Lemma \ref{l:frechet} since  the linearization at $u = 0$ is $L$.
 To get   (N3), we will use   use the form \eqr{e:togetN1} of the equation to write
\begin{align}	 
	\cN (u) - \cN (v) = a_{ij} (p,u,\nabla u) \, u_{ij} &+ \Omega (p, u , \nabla u) 
	-a_{ij} (p,v,\nabla v) \, v_{ij} - \Omega (p, v , \nabla v)  \notag \\
	 =  \Omega (p, u , \nabla u) - \Omega (p, v , \nabla v)  +&
	 a_{ij} (p,u,\nabla u) \, (u_{ij} - v_{ij} )   +  \left( a_{ij} (p,u,\nabla u) - a_{ij} (p,v,\nabla v)
	 \right) \, v_{ij} 
	\, . \notag
\end{align}
 Lemma \ref{l:calculus} (the fundamental theorem of calculus) and the  $C^2$ bound for $u$ gives
 \begin{align}	\label{e:togetN1b}
	\left| \cN (u) - \cN (v) \right| &\leq 
	C\,  \left( |u-v| + \left| \nabla u - \nabla v \right| \right) +
	 C \, \left| \Hess_u - \Hess_v  \right| 
	\, .  
\end{align}
Property (N3) follows by squaring  and integrating.
The analyticity of $\cN$, i.e.,  (N4), 
follows similarly.
 \end{proof}

\end{document}